\theoremstyle{plain}
\newtheorem{thm}{Theorem}[section]
\newtheorem{theorem}[thm]{Theorem}
\newtheorem{lemma}[thm]{Lemma}
\newtheorem{corollary}[thm]{Corollary}
\newtheorem{proposition}[thm]{Proposition}
\theoremstyle{definition}
\newtheorem{remark}[thm]{Remark}
\newtheorem{remarks}[thm]{Remarks}
\newtheorem{definition}[thm]{Definition}
\numberwithin{equation}{section}
\newcommand{\ga}[2]{\begin{gather}\label{#1}#2 \end{gather}}
\newcommand{\Pic}{{\rm Pic}}
\newcommand{\Spec}{{\rm Spec \,}}
\newcommand{\sB}{{\mathcal B}}
\newcommand{\sL}{{\mathcal L}}
\newcommand{\sM}{{\mathcal M}}
\newcommand{\sO}{{\mathcal O}}
\newcommand{\C}{{\mathbb C}}
\newcommand{\F}{{\mathbb F}}
\newcommand{\N}{{\mathbb N}}
\renewcommand{\P}{{\mathbb P}}
\newcommand{\Q}{{\mathbb Q}}
\newcommand{\R}{{\mathbb R}}
\newcommand{\Z}{{\mathbb Z}}
\title [non-liftable automorphisms]{Non-liftability of  automorphism groups  of a K3 surface in positive characteristic}
\author{H\'el\`ene Esnault and Keiji Oguiso} 
\address{Freie Universit\"at Berlin, Arnimallee 3, 14195, Berlin,  Germany}
\email{esnault@math.fu-berlin.de}
\address{Department of Mathematics, Osaka University, Toyonaka 560-0043, Osaka, Japan and Korea Institute for Advanced Study, Hoegiro 87, Seoul, 
133-722, Korea}
\email{oguiso@math.sci.osaka-u.ac.jp}
\thanks{The first  author is supported by  the Einstein program and the ERC
Advanced
Grant 226257. The second author is supported by JSPS Grant-in-Aid (S) No 25220701, JSPS Grant-in-Aid (S) No 22224001, JSPS Grant-in-Aid (B) No 22340009, and by KIAS Scholar Program.}
\date{ February 5th, 2015}
\begin{document}
\begin{abstract}
We show that  a characteristic $0$
 model $X_R\to \Spec R$, with Picard number $1$ over a geometric generic point,   of a  K3 surface in characteristic $p\ge 3$, essentially kills  all automorphisms (Theorem~\ref{cor:entropy2}). We show that there is an  explicitely constructed automorphism on a supersingular  K3 
surface in characteristic $3$, which has positive entropy, the logarithm of a Salem number of degree $22$ 
 (Theorem~\ref{NoLiftPosEntropy}). In particular it does not lift to characteristic $0$. In addition, we show that in any large characteristic,  there is an automorphism of a supersingular K3  which has positive entropy and does not lift to characteristic $0$ (Theorem~\ref{thm:beta}).
\end{abstract}
\maketitle

\section{Introduction}\label{intro}

Let $X$ be a K3 surface over an algebraically closed field $k$ of characteristic $p>0$. A classical theorem \cite{Del81} asserts that the formal universal deformation space $\hat S$  of $X$  is unobstructed, and is formally smooth of dimension $20$ over $W(k)$, the ring of Witt vectors of $k$.  Moreover, the closed formal  subscheme of $\hat S$ parametrizing the locus $\hat \Sigma(X, L)$  over which a given line bundle $L$ on $X$ lifts, is  a  hypersurface, flat over $W(k)$. The aim of our article is to understand  conditions for automorphisms of $X$  to be or not to be liftable to a proper  model $X_R\to \Spec R$ of $X$, where $R$ is a discrete valuation ring such that $\Spec R\to \hat S$ dominates $\Spec W(k)$.  Said in words, we study conditions on automorphisms of $X$   to lift to characteristic $0$, or not.  One motivation for this study is the observation that the crystalline classes of graphs of automorphisms on a positive characteristic K3 surface obey the Fontaine-Mazur  $p$-adic variational Hodge conjecture as expressed in 
\cite[Conj.~1.2]{BlEsKe14} (see Remark~\ref{rmk:hodge}).

\medskip

 Our main results are Theorem~\ref{cor:entropy2}, Theorem~\ref{NoLiftPosEntropy} and Theorem~\ref{thm:beta}. Simplified versions are Theorems~\ref{IntroThm2},~\ref{IntroThm3},~\ref{IntroThm4} explained below. For the discussion in the introduction, we assume for simplicity that $p\ge 3$.

\medskip

Recall one has  a natural  injective specialization  homomomorphism 
$$ \iota: {\rm Aut}^e\, (X_{\bar K}/\bar K) \to {\rm Aut}\, (X/k)\,\, $$ 
where  $K$ is the field of fractions of $R$ and ${\rm Aut}^e(X_{\bar K}/\bar K) \subset {\rm Aut}(X_{\bar K}/\bar K)$ is the subgroup of automorphisms which lift to some proper model $X_R\to {\rm Spec} \ R$.
We say  $f \in {\rm Aut}\, (X/k)$ is not geometrically liftable to characteristic $0$  if it  is not in the image
of $\iota$
  (see Section~\ref{sec:notations} for details). 

\medskip

In the complex case, for any K3 surface $M$ and any given line bundles $L_i$ ($1 \le i \le d \le 19$) on $M$  for which  the  $ L_i,  \ i=1, \ldots, d$ are part of a $\Z$-basis of ${\rm Pic}\, (M)$,  there is  a smooth proper small deformation 
$$\psi : (\sM, \sL_1, \ldots , \sL_d) \to \Delta$$
of $(M, L_1, \ldots , L_d)$, where $\Delta$ is the analytic disc,  such that ${\rm Pic}\, (\sM_t) = \langle (\sL_1)_t, \ldots , (\sL_d)_t \rangle \simeq \Z^d$ for  a point $t \in \Delta,$ generic  in the  complex analytic sense.
The proof goes via a study of the period map. 
 In particular, if $d=1$ and $L_1$ is ample primitive, then $\psi : \sM \to \Delta$ is also projective and ${\rm Pic}\, (\sM_t) = \Z (\sL_1)_t$ for generic $t$. As a consequence,  the specialization homomorphism has very small image even if ${\rm Aut}\, (M)$ is very large. In other words, interesting automorphisms disappear on the generic fiber $\sM_t$  (see eg. \cite{Og03}).

\medskip 

Our first aim is to show the analogous results Theorems ~\ref{IntroThm1}, ~\ref{IntroThm2}  on  liftings  from characteristic $p$ to characteristic $0$  (see  Theorems~\ref{thm:one},  ~\ref{cor}, 
~\ref{cor:entropy2} for more precise statements):

\begin{theorem} \label{IntroThm1} If $p\ge 3$, 
there is a discrete valuation ring $R$, finite over the ring of Witt vectors $W(k)$, together with a 
projective model $X_R\to \Spec R$, such that the Picard rank of $X_{ \bar K}$ is $1$. 
\end{theorem}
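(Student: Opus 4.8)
The plan is to realize the lift as a deformation of a polarized K3 surface, and then to choose inside the polarized deformation space a one-parameter family that is "Noether--Lefschetz general", so that the geometric generic fibre acquires no line bundle other than the polarization.

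First I would fix an ample primitive line bundle $L$ on $X$ and pass to the closed formal subscheme $\hat\Sigma(X,L)\subset \hat S$. By the polarized refinement of Deligne's theorem quoted above this is formally smooth over $W(k)$ of relative dimension $19$, so its complete local ring is $A\cong W(k)[[t_1,\dots,t_{19}]]$; since $L$ extends and is relatively ample, Grothendieck's existence theorem algebraizes the formal family to a projective scheme $X_A\to \Spec A$ carrying a relatively ample $\sL$ restricting to $L$. Any model I eventually produce will be pulled back from $X_A$, hence automatically projective, and the base change will dominate $\Spec W(k)$ by construction.

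Next I would control the Picard rank. For a primitive $M\in \Pic(X)$ with $M\notin \Q L$, the locus where $M$ also lifts is $Z_M:=\hat\Sigma(X,L)\cap \hat\Sigma(X,M)$. Since $L$ lifts to first order along a tangent direction $\kappa\in H^1(X,T_X)$ exactly when $\langle \kappa, c_1(L)\rangle=0$ under the cup-product pairing $H^1(X,T_X)\times H^1(X,\Omega^1_X)\to H^2(X,\sO_X)$, and since $c_1(L),c_1(M)$ are linearly independent in $H^1(X,\Omega^1_X)$, the hyperplanes $c_1(L)^\perp$ and $c_1(M)^\perp$ differ; hence $\hat\Sigma(X,M)\neq \hat\Sigma(X,L)$ and $Z_M=V(f_M)$ for a nonzero $f_M\in A$, a proper hypersurface through the closed point $s_0$. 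As $\Pic(X)\cong \Z^{\rho}$ is finitely generated there are only countably many such $M$, so the problem reduces to finding a finite extension $R$ of $W(k)$ and a local homomorphism $A\to R$ whose generic point avoids every $Z_M$.

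The crux is this last step. I would first choose a direction $x=(x_1,\dots,x_{19})\in W(k)^{19}$ so that no $f_M$ vanishes on the formal line $t_i=sx_i$; writing $f_M=\sum_d f_M^{(d)}$ in homogeneous parts, this only asks $f_M^{(d_0)}(x)\neq 0$ for the least $d_0$ with $f_M^{(d_0)}\neq 0$, i.e. that $x$ avoid countably many proper hypersurfaces of $\A^{19}_{W(k)}$, which is possible because $W(k)$ is uncountable. Restricting yields nonzero power series $g_M(s)=f_M(sx)\in W(k)[[s]]$, and by $p$-adic Weierstrass preparation each $g_M$ has only finitely many zeros of positive valuation in $\overline{K_0}$, where $K_0=\mathrm{Frac}\,W(k)$. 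Since the elements of positive valuation in $\overline{K_0}$ form an uncountable set, I can pick $\pi\in \overline{K_0}$ of positive valuation avoiding all these countably many zeros; then $K:=K_0(\pi)$ is finite over $K_0$, its ring of integers $R$ is a DVR finite over $W(k)$ with residue field still $k$ (as $k$ is algebraically closed), and $t_i\mapsto \pi x_i\in \mathfrak{m}_R$ defines $A\to R$ with $f_M(\pi x)=g_M(\pi)\neq 0$ for all $M$. Pulling $X_A$ back along $\Spec R\to \Spec A$ gives the projective model $X_R\to \Spec R$: the closed point maps to $s_0$ while the generic point lies off every $Z_M$, so the specialization map $\iota$ has image exactly $\Z L$ and the Picard rank of $X_{\bar K}$ is $1$. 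The main obstacle is precisely producing an \emph{honest} DVR point finite over $W(k)$ --- rather than a point of some large valuation ring --- in the complement of all the loci $Z_M$; the deformation-theoretic input (properness and codimension one of each $Z_M$) is standard, and the real work is the genericity argument, where uncountability of $W(k)$ fixes a good direction and Weierstrass preparation keeps the chosen parameter algebraic, hence inside a finite extension.
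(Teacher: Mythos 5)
Your overall strategy --- algebraize the universal polarized formal family over $\hat\Sigma(X,L)$, observe that the extra line bundles $M$ that could appear on a geometric generic fibre are indexed by the countable group $\Pic(X)$, and then produce a point $\Spec R\to \hat\Sigma(X,L)$ finite over $W(k)$ avoiding all the jumping loci $Z_M$ --- is the same Noether--Lefschetz skeleton as the paper's sketch and as \cite[App.~A]{LieOls11}, and your genericity step (Baire category in $W(k)^{19}$ to fix a direction, then $p$-adic Weierstrass preparation to keep the chosen parameter algebraic over ${\rm Frac}\,W(k)$) is sound. The genuine gap is at the step you treat as routine deformation theory: the claim that each $Z_M=\hat\Sigma(X,L)\cap\hat\Sigma(X,M)$ is a \emph{proper} closed formal subscheme of $\hat\Sigma(X,L)$, i.e.\ that $f_M\neq 0$. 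You derive this from the assertion that $c_1^{\rm Hodge}(L)$ and $c_1^{\rm Hodge}(M)$ are linearly independent in $H^1(X,\Omega^1_X)$, so that the tangent hyperplanes $c_1^{\rm Hodge}(L)^{\perp}$ and $c_1^{\rm Hodge}(M)^{\perp}$ differ. In characteristic $p$ this is false in general: $c_1^{\rm Hodge}$ factors through $\Pic(X)/p\Pic(X)$, and its image is merely an $\F_p$-submodule of the $k$-vector space $H^1(X,\Omega^1_X)$, so classes that are independent in $\Pic(X)$ (even in $\Pic(X)/p$) can have $k$-proportional Hodge Chern classes. Worse, for a supersingular $X$ one has $\rho(X)=22>20=\dim_k H^1(X,\Omega^1_X)$, so there exist primitive $M\notin\Q L$ with $c_1^{\rm Hodge}(M)=0$, and for these your tangent-space comparison gives no information about $\hat\Sigma(X,M)$ at all. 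Relatedly, the formal smoothness of $\hat\Sigma(X,L)$ over $W(k)$ that you invoke is not automatic: it requires $c_1^{\rm Hodge}(L)\neq 0$, which is exactly the content of Proposition~\ref{existence} and is where the hypothesis $p\ge 3$ (for Artin-supersingular $X$, via \cite[Prop.~11.9]{GK00}) enters; your proof never uses $p\ge 3$, which is a symptom of the missing step.

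The paper closes precisely this gap with a $p$-adic, not Hodge-theoretic, input: Theorem~\ref{cor} shows that the kernel of the Gau{\ss}-Manin connection on $H^2_{DR}$ over all of $\hat\Sigma(X,L)$ is spanned over $W$ by $c_1^{DR}(\sL)$, and together with the injectivity of the crystalline Chern class $\Pic(X)\to H^2_{DR}(\hat X_W/W)$ (\cite[2.10]{Del81}) this rules out $\hat\Sigma(X,L)\subset\hat\Sigma(X,M)$ for $M\notin\Q L$: otherwise $\sM$ would give a second horizontal class, forcing $c_1(M)$ to be a $W$-multiple of $c_1(L)$ in $H^2_{cris}(X/W)$ and hence $M\in\Q L$. (The proof in \cite{LieOls11} obtains the same properness from \cite{Ogu79}, \cite{Ogu83}.) So your reduction to ``avoid countably many hypersurfaces'' is correct, but the certification that the loci $Z_M$ really are proper hypersurfaces cannot be extracted from first-order deformation theory and Hodge Chern classes alone; you need the crystalline/de Rham argument (or an equivalent), and that is where the real content of the theorem lies.
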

The proof is given in \cite[App.~A]{LieOls11}. The proof in {\it loc.cit.} relies on  \cite{Ogu79}, \cite{Ogu83} and on the properties of the stack parametrizing deformations of a K3 surface together with line bundles.  In Section~\ref{sec:lifting}, we sketch another proof, relying on 
 \cite{Del81}. Theorem~\ref{cor}  has its own interest. 

As an immediate but remarkable consequence of Theorem~\ref{IntroThm1}, we obtain:

\begin{theorem} \label{IntroThm2} If $p\ge 3$, there is a projective model $X_R\to \Spec R$ 
such that no subgroup $G \subset {\rm Aut}\, (X)$, except for 
$G = \{ {\rm id}_X \}$ is geometrically liftable to $X_R\to \Spec R$, unless 
${\rm Pic}\, (X) = \Z \cdot  H$ with  self-intersection number $(H^2) = 2$.  
\end{theorem}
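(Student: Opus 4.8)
The plan is to deduce the statement from Theorem~\ref{IntroThm1} by lifting, whenever possible, a polarization of degree at least $4$, and then to read off ${\rm Aut}(X_{\bar K}/\bar K)$ from the classical description of the automorphism group of a Picard rank one K3 surface in characteristic $0$. Concretely, I would separate two cases according to ${\rm Pic}(X)$. If ${\rm Pic}(X)=\Z\cdot H$ with $(H^2)=2$, we are in the excepted case and there is nothing to prove. Otherwise I claim there is a primitive ample class $H'\in {\rm Pic}(X)$ with $(H'^2)\ge 4$: when ${\rm rank}\,{\rm Pic}(X)=1$ the primitive ample generator already has square $\ge 4$, and when ${\rm rank}\,{\rm Pic}(X)\ge 2$ the ample cone is a full dimensional open chamber inside the positive cone of a lattice of signature $(1,\rho-1)$, hence contains primitive classes of arbitrarily large positive self-intersection. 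Applying Theorem~\ref{IntroThm1} (in the refined form of Theorem~\ref{cor}, which permits prescribing the lifted polarization) to the pair $(X,H')$, I obtain a projective model $X_R\to\Spec R$ whose geometric generic fibre $X_{\bar K}$ has Picard rank $1$.

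Next I would pin down ${\rm Pic}(X_{\bar K})$. The relatively ample bundle restricts to $H'$ on the special fibre and to a class $\bar H'$ on $X_{\bar K}$ with $(\bar H'^2)=(H'^2)\ge 4$, and the specialization homomorphism $\mathrm{sp}\colon {\rm Pic}(X_{\bar K})\to {\rm Pic}(X)$ is injective and compatible with the intersection forms. Writing ${\rm Pic}(X_{\bar K})=\Z\cdot L$ with $L$ primitive and $\bar H'=mL$, the equality $H'=m\cdot\mathrm{sp}(L)$ together with primitivity of $H'$ forces $m=1$; thus ${\rm Pic}(X_{\bar K})=\Z\cdot\bar H'$ with $(\bar H'^2)\ge 4$.

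The heart of the argument is then the characteristic $0$ statement: a K3 surface over an algebraically closed field of characteristic $0$ with ${\rm Pic}=\Z\cdot\bar H$ and $(\bar H^2)=2d$ has trivial automorphism group for $d\ge 2$, while for $d=1$ its automorphism group is the order two group generated by the covering involution of the degree two map to $\P^2$. I would prove this by spreading $X_{\bar K}$ out over a finitely generated field and embedding the latter into $\C$ (Picard rank, degree and the automorphism group are unchanged), and then invoking the Torelli theorem: ${\rm Aut}$ injects into the group of Hodge isometries of $H^2(\,\cdot\,,\Z)$ preserving the ample cone; on ${\rm Pic}=\Z\bar H$ such an isometry is $+1$, and gluing $+1$ on ${\rm Pic}$ to a Hodge isometry of the transcendental lattice $T$ to an integral isometry of the unimodular lattice $H^2(\,\cdot\,,\Z)$ requires the two induced automorphisms of the discriminant group $\Z/2d$ to agree. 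For the only candidate nonsymplectic involution, namely $-1$ on $T$, this reads $x\equiv -x$ on $\Z/2d$, which holds precisely when $d=1$. Hence for $d\ge 2$ the only such isometry is the identity, so ${\rm Aut}(X_{\bar K}/\bar K)=\{{\rm id}\}$.

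Finally, since ${\rm Aut}^e(X_{\bar K}/\bar K)\subset {\rm Aut}(X_{\bar K}/\bar K)=\{{\rm id}\}$, the injectivity of $\iota$ shows that the only geometrically liftable subgroup of ${\rm Aut}(X)$ is $\{{\rm id}_X\}$, as claimed outside the excepted case. The main obstacle I anticipate is twofold. First, one must see that the degree two case is genuinely unavoidable exactly when ${\rm Pic}(X)=\Z H$ with $(H^2)=2$: there every ample class is a multiple of $H$, and since the Hodge lifting condition on $c_1$ is linear, lifting any polarization forces $H$ itself to lift and the generic Picard lattice to remain of degree $2$, so the covering involution survives and the exception is necessary. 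Second, there is the transfer of the complex Picard rank one classification of ${\rm Aut}$ to the field $\bar K$, which, although standard, is the only nonformal input beyond Theorem~\ref{IntroThm1}.
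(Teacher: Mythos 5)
Your route coincides with the paper's: arrange a projective model whose geometric generic fibre has Picard group generated by a class of square at least $4$, and then kill all automorphisms over $\C$ by the Nikulin/Torelli discriminant argument ($+1$ on ${\rm NS}$ versus $-1$ on $T$ forces $2d=2$). The characteristic~$0$ part, the identification of ${\rm Pic}(X_{\bar K})$ via the injectivity of the specialization map, and the final descent through $\iota$ are all correct and match Lemma~\ref{trivial} and the end of the proof of Theorem~\ref{cor:entropy2}~1).

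The gap is in your choice of $H'$. The refined form of Theorem~\ref{IntroThm1} that lets you prescribe the lifted polarization (Theorem~\ref{cor} together with Remark~\ref{rmk:follow}) applies only to a line bundle as in Proposition~\ref{existence}, i.e.\ one with $c_1^{\rm Hodge}\neq 0$: this hypothesis is what makes the locus $\Sigma(X,L)$ formally smooth and makes $c_1^{DR}(\sL)$ a nonzero generator of the kernel of the Gau{\ss}--Manin connection, hence what forces the generic Picard rank to be $1$. You only require $H'$ to be primitive, ample and of square $\ge 4$. For $X$ not Artin-supersingular this suffices, since by \cite[Prop.~10.3]{GK00} the map ${\rm Pic}(X)/p{\rm Pic}(X)\to H^1(X,\Omega^1_{X/k})$ is injective and a primitive class is not in $p{\rm Pic}(X)$. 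But for supersingular $X$ --- which is included in the statement and is the reason for the hypothesis $p\ge 3$ --- this map has a nontrivial kernel (the source has $\F_p$-dimension $22$, the target $k$-dimension $20$), so a primitive ample class of large square may have vanishing Hodge Chern class, and then the lifting argument cannot be run for it. This is precisely what Lemma~\ref{trivial2} repairs: starting from an ample primitive $L_0$ with $c_1^{\rm Hodge}(L_0)\neq 0$ furnished by Proposition~\ref{existence}, if $(L_0^2)=2$ (which forces $\rho\ge 2$ outside the excepted case) one takes $L=pM+L_0$ with $M$ ample completing $L_0$ to part of a $\Z$-basis; then $L$ is primitive, ample, $(L^2)>2$, and $c_1^{\rm Hodge}(L)=c_1^{\rm Hodge}(L_0)\neq 0$ because the term $pM$ dies in the $k$-vector space $H^1(X,\Omega^1_X)$. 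You need this (or an equivalent device) to legitimize your first step; with it, the rest of your argument goes through.
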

See also Theorem ~\ref{cor:entropy2} 2) for the exceptional case. We also note that when $X$ is not supersingular, Theorem~\ref{IntroThm2} is in  sharp contrast to the model constructed by Lieblich and Maulik \cite{LieMau11}, to show the  Kamawata-Morrison Cone Conjecture for K3 surfaces in positive characteristic.  We prove Theorem~\ref{IntroThm2} in Section~\ref{sec:Nolifting}.

\medskip

The second aim of our article is to show the richness of automophisms of supersingular K3 surfaces of Artin invariant $1$, in  view of  the non-liftability problem. Supersingular K3 surfaces of Artin invariant $1$ are unique up to isomorphisms, for each field $k$ of characteristic $p >0$. They are the most special K3 surfaces (see Section ~\ref{sec:NonLiftable} for a brief review). We denote them by $X(p)$. Recently,  several interesting aspects of automorphisms of $X(p)$ for various $p$ were studied (\cite{DK09}, \cite{DK09-2}, \cite{Sh13}, \cite{KS12} and references therein). The notion of entropy is classical in the complex case, and is of topological nature (see e.g. \cite{Og14} and 
references therein); it has been  introduced   in  \cite{EsnSri13} in positive characteristic. The positivity of entropy is  a numerical  measure of complexity or richness of automorphisms in any characteristic (see Section \ref{ss:K3}). We note here that an automorphism of positive entropy is necessarily of infinite order, but it is a stronger constraint as  there are many automorphisms of infinite order with null-entropy. 

\medskip

Our next main results are Theorems ~\ref{IntroThm3} and  ~\ref{IntroThm4}, showing the richness of automorphisms of supersingular K3 surfaces of Artin invariant $1$:

\begin{theorem} \label{IntroThm3}
There is an $f \in {\rm Aut}\, (X(3))$ of positive entropy such that for all $n\in \Z\setminus \{0\}$,   $f^n$ 
is  not geometrically  liftable to characteristic zero. The entropy of $f$ is the logarithm of a Salem number of degree $22$.  \end{theorem}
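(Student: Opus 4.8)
The plan is to reduce the statement to two essentially independent inputs: an explicit construction of $f$ whose action on $\Pic(X(3))$ has an \emph{irreducible} characteristic polynomial of degree $22$, and a purely numerical obstruction to liftability coming from the bound on the Picard number in characteristic $0$. I separate the geometry (realizing an isometry as an automorphism) from the arithmetic (the Salem number and its powers).

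First I would construct $f$. Since $X(3)$ is the supersingular K3 surface of Artin invariant $1$, its N\'eron--Severi lattice ${\rm NS}(X(3)) = \Pic(X(3))$ is an explicitly known lattice of rank $22$, signature $(1,21)$ and discriminant $-3^2$. Using Ogus's crystalline Torelli theorem for supersingular K3 surfaces, an isometry $g$ of ${\rm NS}(X(3))$ that preserves the ample cone and is compatible with the characteristic subspace (the period datum) is induced by an automorphism of $X(3)$. I would therefore exhibit an explicit isometry $g$ whose characteristic polynomial $\Phi(x)\in\Z[x]$ is an irreducible Salem polynomial of degree $22$ --- so $\Phi$ has a single real root $\lambda>1$, its reciprocal $1/\lambda$, and $20$ roots on the unit circle, none a root of unity --- and check that, after composing with an element of the Weyl group if necessary, $g$ preserves the ample cone. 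The resulting $f\in{\rm Aut}(X(3))$ then has entropy $\log\lambda$, the logarithm of a Salem number of degree $22$, by the definition of entropy in positive characteristic.

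Next comes the non-liftability. Suppose for contradiction that $f^n$ lies in the image of the specialization map $\iota$ for some projective model $X_R\to\Spec R$, with $n\in\Z\setminus\{0\}$. Entropy is a specialization invariant, so the lift $\tilde f$ of $f^n$ on the characteristic-$0$ K3 surface $X_{\bar K}$ has entropy $|n|\log\lambda=\log\lambda^{|n|}$; its dynamical degree is the Salem number $\lambda^{|n|}$. On a projective K3 surface in characteristic $0$ the spectral radius of $\tilde f^*$ on $H^2$ is attained on ${\rm NS}(X_{\bar K})\otimes\R$ (Perron--Frobenius applied to the nef cone, which spans ${\rm NS}\otimes\R$), so $\lambda^{|n|}$ is an eigenvalue of the integral operator $\tilde f^*$ on ${\rm NS}(X_{\bar K})$. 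Hence the minimal polynomial of $\lambda^{|n|}$ over $\Q$ divides the characteristic polynomial of $\tilde f^*$ on ${\rm NS}(X_{\bar K})$, giving $[\Q(\lambda^{|n|}):\Q]\le\rho(X_{\bar K})\le 20$, since a complex K3 surface has Picard number at most $20$. To derive a contradiction I would show $[\Q(\lambda^m):\Q]=22$ for every integer $m\ge 1$. Because $\lambda$ is a Salem number of degree $22$, the field $\Q(\lambda+\lambda^{-1})$ is totally real of degree $11$ and $[\Q(\lambda):\Q(\lambda+\lambda^{-1})]=2$. As $\lambda^m+\lambda^{-m}$ is a $\Z$-polynomial in $\lambda+\lambda^{-1}$, we have $\Q(\lambda^m+\lambda^{-m})\subseteq\Q(\lambda+\lambda^{-1})$, and since $11$ is prime this is either $\Q$ or the full degree-$11$ field. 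It cannot be $\Q$: for a unit-circle conjugate $\mu=e^{i\theta}$ of $\lambda$ the value $\mu^m+\mu^{-m}=2\cos(m\theta)$ is a conjugate of $\lambda^m+\lambda^{-m}=\lambda^m+\lambda^{-m}>2$, which is impossible. So $\Q(\lambda^m+\lambda^{-m})$ has degree $11$, and $\Q(\lambda^m)$ cannot coincide with this totally real field, because the conjugate $\mu^m$ of $\lambda^m$ is non-real: $\mu$ is not a root of unity, so $\mu^m\ne\pm1$. Thus $[\Q(\lambda^m):\Q]=22>20$, contradicting the bound above and proving that $f^n$ is not liftable for any $n\ne 0$.

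The main obstacle is the first step: producing an automorphism of $X(3)$ whose induced isometry of ${\rm NS}(X(3))$ has irreducible Salem characteristic polynomial of degree $22$ and preserves the ample cone. The lattice-theoretic realization via Ogus's Torelli theorem is delicate, since one must simultaneously control the characteristic subspace (to obtain a genuine automorphism rather than a mere isometry), the positivity needed to land in the ample cone, and the arithmetic of $\Phi$ (both its irreducibility and the Salem property). By contrast, once $f$ is in hand the specialization-plus-Salem-arithmetic argument is essentially formal.
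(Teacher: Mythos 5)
Your non-liftability argument is correct and is essentially the paper's: the paper also derives the contradiction from the fact that $\lambda^{n}$ remains a Salem number of degree $22$ for all $n\neq 0$ (it cites Smyth's survey for this, whereas you reprove it via the totally real subfield $\Q(\lambda+\lambda^{-1})$ of prime degree $11$ --- a nice, self-contained touch), combined with the equivariance of the specialization map ${\rm NS}(X_{\bar K})\hookrightarrow {\rm NS}(X)$ and the bound $\rho(X_{\bar K})\le 20$ in characteristic $0$. Your deduction that the minimal polynomial of $\lambda^{|n|}$, of degree $22$, cannot divide the characteristic polynomial of an operator on a lattice of rank $\le 20$ is exactly the mechanism used in Theorem~\ref{NoLiftPosEntropy}~(1).

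The genuine gap is the first step, and you flag it yourself: you never actually produce $f$. The existence of an automorphism of $X(3)$ whose action on ${\rm NS}(X(3))$ has an \emph{irreducible} degree-$22$ Salem characteristic polynomial is the entire content of the theorem; ``I would exhibit an explicit isometry $g$ \dots and check that it preserves the ample cone'' is a plan, not a proof, and the realization problem via Ogus's crystalline Torelli theorem is not routine --- one must control the characteristic subspace, the positivity, and the arithmetic of $\Phi$ simultaneously, and there are lattice-theoretic obstructions to which Salem polynomials arise on a given lattice of discriminant $-9$. The paper circumvents all of this by a completely different route: it invokes the Kondo--Shimada theorem that ${\rm Aut}(X(3))=\langle \tau_1,\tau_2,{\rm Aut}(X,H)\rangle$, where $\tau_1,\tau_2$ are the covering involutions of two explicit non-finite degree-$2$ maps to $\P^2$ and $\tau$ is an explicit order-$28$ projective automorphism, takes the concrete word $f=\tau_1\circ\tau\circ\tau_2\circ\tau$, and computes the characteristic polynomial of the known $22\times 22$ matrices $A_1TA_2T$ on ${\rm NS}(X)$ by machine, verifying irreducibility and the leading root $\approx 26.9943$ directly (Proposition~\ref{salem} then forces the polynomial to be Salem rather than cyclotomic). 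Until you supply an actual isometry (or some other concrete construction) with the required properties, your argument establishes only the conditional statement ``if such an $f$ exists, it does not lift,'' not the existence assertion of the theorem.
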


This is the  first explicit example of an automorphism  of positive entropy which can  never be lifted to characteristic zero. In characteristic $2$, (non-explicit) examples have been constructed in \cite{BC13} (see \cite[Thm.~4.2]{EOY14} for a slight  clarification). 
See Theorem~\ref{NoLiftPosEntropy} for the precise statement and Section~\ref{sec:Salem} for the definition of Salem numbers. 

\medskip

Recall that there are  non-projective  complex K3 surfaces with an automorphism for which  the entropy  is the logarithm of a Salem number of degree $22$ (\cite{Mc02}). However,  the  entropy of an automorphism of a  projective K3 surface over a field of characteristic zero is either zero or the logarithm of Salem number of degree $\le 20$ (\cite{Mc02}, \cite{Mc13}). 
In particular, the Salem number we construct in Theorem~\ref{IntroThm3} can not be the Salem number associated to the entropy of a projective K3 surface in characteristic $0$.

Our construction is entirely based on a result of Kondo and Shimada \cite{KS12} and is mildly supported by a  {\tt Mathematica} computation. It would be nicer if one could find a more conceptual reason for the existence. 

\medskip

Finally we show:
\begin{theorem}\label{IntroThm4} 
For $p$ large, there is an automorphism  of $X(p)$, of positive entropy, which is not geometrically  liftable to characteristic $0$.
\end{theorem}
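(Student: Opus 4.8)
The plan is to turn non-liftability into a numerical condition on the Salem degree of $f$, and then to realize, for $p$ large, an automorphism of $X(p)$ whose Salem degree is too big to occur in characteristic $0$. Note first that the deformation-theoretic mechanism of Theorem~\ref{cor:entropy2} is unavailable here: $X(p)$ is supersingular, so ${\rm NS}(X(p))$ has rank $22$ and cannot be made smaller by lifting, and the obstruction must be intrinsic.

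\emph{Step 1 (the obstruction).} Suppose $f\in{\rm Aut}(X(p))$ has positive entropy $h(f)=\log\lambda$ with $\lambda$ a Salem number, and suppose $f\in\im\iota$ for some proper model $X_R\to\Spec R$. The geometric generic fibre $Y:=X_{\bar K}$ is then a K3 surface over the characteristic $0$ field $\bar K$; being of finite type over a field, its base change to $\C$ is Moishezon and K\"ahler, hence projective, so $Y$ is projective and $1\le\rho(Y)\le 20$. The lift $g\in{\rm Aut}(Y)$ of $f$ has the same characteristic polynomial on $\ell$-adic cohomology as $f$, the comparison $H^2_{et}(Y,\Q_\ell)\cong H^2_{et}(X(p),\Q_\ell)$ being compatible with $g^*$ and $f^*$; hence $h(g)=h(f)=\log\lambda$ (see \cite{EsnSri13} for entropy in this setting). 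By McMullen \cite{Mc02}, \cite{Mc13} the entropy of an automorphism of a projective K3 surface in characteristic $0$ is the logarithm of a Salem number of degree $\le 20$, the minimal polynomial of $\lambda$ dividing the characteristic polynomial of $g^*$ on ${\rm NS}(Y)\otimes\Q$; thus $\deg_\Q\lambda\le\rho(Y)\le 20$. Since a Salem degree is even and at most $\rank N_p=22$, it suffices to produce, for $p$ large, an $f\in{\rm Aut}(X(p))$ of positive entropy whose Salem number has degree $22$.

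\emph{Step 2 (the construction).} Write $N_p={\rm NS}(X(p))$, the even lattice of signature $(1,21)$ with discriminant group $(\Z/p)^2$ and discriminant form fixed by Artin invariant $1$ (Rudakov--Shafarevich; \cite{Ogu79}). By Ogus' crystalline Torelli theorem \cite{Ogu83}, an isometry $\phi$ of $N_p$ is induced by an automorphism of $X(p)$ once it preserves the ample cone and the characteristic subspace $K\subset N_p\otimes k$ cutting out the crystalline period. I would secure the period condition for free by constructing $\phi$ in the stable orthogonal group $\widetilde O(N_p)=\ker\bigl(O(N_p)\to O(N_p^\vee/N_p)\bigr)$, since such $\phi$ fix the discriminant form pointwise and therefore fix every characteristic subspace. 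The task then reduces to producing, for each large $p$, an isometry $\phi\in\widetilde O(N_p)$ with irreducible Salem characteristic polynomial $S_p$ of degree $22$. Because $\lambda$ may depend on $p$, there is ample freedom: one chooses $S_p$ of the reciprocal/signature type compatible with $(1,21)$ and realizes it on an even lattice of the genus prescribed by the $p$-adic discriminant form of $N_p$, using the standard realization of reciprocal integer polynomials as lattice isometries; the classification of $N_p$ by that discriminant form identifies the realization with $N_p$.

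The main obstacle is exactly this genus-theoretic realization: producing, for $p$ large, a degree-$22$ irreducible Salem isometry that acts trivially on $(\Z/p)^2=N_p^\vee/N_p$. Once it is in hand the rest is essentially automatic. Positivity of entropy holds because the spectral radius is $\lambda>1$ (Section~\ref{ss:K3}). Irreducibility forces every eigenvalue of $\phi$ to be non-cyclotomic, so $\phi$ admits no periodic $(-2)$-class and no $(-2)$-wall obstructs $\phi$ from preserving a chamber; it remains only to ensure that the nef eigenline $\eta^+$ (the isotropic $\lambda$-eigenvector in the closure of the positive cone) lies on no $(-2)$-wall, a genericity condition to be enforced in the construction. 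Crystalline Torelli then yields $f\in{\rm Aut}(X(p))$ with $f^*=\phi$, $h(f)=\log\lambda$ and Salem degree $22$, which by Step~1 is not geometrically liftable. The hypothesis ``$p$ large'' enters precisely to guarantee enough room in the genus of $N_p$ and in $\widetilde O(N_p)$ to carry this out; this genus-theoretic existence statement replaces, for general large $p$, the explicit Kondo--Shimada input \cite{KS12} used for $X(3)$ in Theorem~\ref{IntroThm3}.
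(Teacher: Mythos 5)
Your Step 1 is sound and is in fact the same obstruction the paper itself uses in Theorem~\ref{NoLiftPosEntropy}: an equivariant embedding ${\rm NS}(X_{\bar K})\hookrightarrow {\rm NS}(X)$ with $\rho(X_{\bar K})\le 20$ is incompatible with an irreducible degree-$22$ Salem factor. But the proof collapses at Step 2, and you say so yourself: ``the main obstacle is exactly this genus-theoretic realization\dots Once it is in hand the rest is essentially automatic.'' That realization --- a degree-$22$ irreducible Salem isometry of $N_p$ lying in $\widetilde O(N_p)$, for every large $p$ --- is not a routine lemma; it is essentially the main theorem of the follow-up paper \cite{EOY14} cited in the footnote to this very statement, and in the present paper it is achieved only for $p=3$, by the explicit Kondo--Shimada matrices plus a computer check. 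Nothing in your write-up produces the polynomial $S_p$, verifies it is realized in the genus of $N_p$ with trivial action on $(\Z/p)^2$, or controls the signature/positivity data. A second unresolved sub-gap sits inside the same step: even granting the isometry, you need $\phi$ to preserve the ample cone (not merely the positive cone) before Ogus' crystalline Torelli applies; the absence of periodic $(-2)$-classes does not give this, and the usual fix --- composing with a Weyl group element --- changes the characteristic polynomial. You defer this to ``a genericity condition to be enforced in the construction'' without enforcing it. So as written the argument proves nothing beyond the (correct) reduction.

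It is worth noting that the paper's actual proof of Theorem~\ref{IntroThm4} deliberately avoids the Salem-degree-$22$ route because of exactly this difficulty. Its non-liftability certificate is not the Salem degree but the order of the action on $H^0(X,\Omega^2_{X/k})$: by Jang's theorem this image is cyclic of order $p+1$, whereas in characteristic $0$ Nikulin and Ueno--Deligne bound the corresponding order by $66$; hence any $f$ with $f^*\omega$ a primitive $m$-th root of unity, $m\ge 67$, cannot lift. Positive entropy is then arranged separately and cheaply: a positive-entropy symplectic $f$ is built from Mordell--Weil translations of two elliptic fibrations on ${\rm Km}(E\times E)$, and a trace/Jordan-block estimate shows $f\circ h^{\beta N}$ retains both positive entropy and a large-order action on $\omega$. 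If you want to salvage your approach, you must either import the lattice-realization theorem of \cite{EOY14} as a black box (at which point the statement follows from Step 1 alone), or switch to a liftability obstruction, such as the one via $H^0(\Omega^2)$, that does not require Salem degree $22$.
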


See also Theorem~\ref{thm:beta} for a more general statement. 
Our construction is based on Jang's result \cite[Thm.~3.3]{Jan14} together 
with 
 a  result on the Mordell-Weil groups of elliptic fibrations 
due to Shioda \cite{Sh90}. 
In the proof, one shows a way to construct  an automorphism of positive entropy out of  those of null-entropy. This might have  an  interest on its own.
\footnote{Since the construction presented in this article has been written, using  Shioda's work on the existence of different elliptic structures with rich N\'eron-Severi groups  on the surfaces $X(p)$ , it is shown in \cite[Thm.~1.1]{EOY14} that for almost all $p$, $X(p)$ carries an automorphism, the entropy of which is the logarithm of a Salem number of degree $22$.}
We prove Theorem~\ref{IntroThm3}, Theorem~\ref{IntroThm4} in Sections~\ref{sec:NonLiftable},
 ~\ref{sec:EllipticK3}.

\medskip

\noindent
{\bf Ackowledgement.} \\[.2cm]
We thank  Takao Iohara for kindly accepting to check again our {\tt Mathematica}  computation  using  {\tt Maxima}. We thank Gabriel F\"urstenheim for asking  us for concrete non-liftable automorphisms, which prompted the starting point of our work.  We thank Luc Illusie and Arthur Ogus for a friendly and  interesting discussion on the articles \cite{Del81} and \cite{Ogu79}.  We thank Matthias Sch\"utt for carefully reading   the first version,  
Junmeyong Jang for mentioning \cite{Jan14} to us, which enabled us to find Theorem ~\ref{IntroThm4}, and  Olivier Benoist for mentioning \cite{LieOls11} to us. Finally we thank the referee for a friendly and careful reading of our article which enabled us to improve it.

\section{Lifting automorphisms of K3 surfaces: notations and formulation of the  problems} \label{sec:notations}
\noindent
We introduce some notations and formulate the main questions addressed in our article. 
\subsection{Models and Lifts} \label{ss:models}
 Let $M$ be a  proper   variety defined over a perfect characteristic $p>0$ field $k$, and $R$ be a discrete valuation ring (in the sequel abbreviated as DVR)  with residue field $k$ and  field  of fractions  $K = {\rm Frac}\, (R)$. A {\it model of $M$ over $\Spec R$} is a proper flat morphism  of schemes $M_R\to \Spec R$ lifting $M \to \Spec k$. If $M$ is smooth, then  a lift  $M_R\to \Spec R$   is a model if and only if $M_R\to \Spec R$ is smooth. We call a model $M_R\to \Spec R$ a {\it lift to characteristic zero} if $K$ is of characteristic zero. If $M_R\to \Spec R$ is a model of $X$,  and $L\supset K$ is any field extension, we say that $M_L=M_R\otimes_R L$  is a {\it lift} of $M$ to $L$. 
 If $K$ has characteristic $0$, we say $M_L$ is a lift of $M$ {\it to characteristic} $0$. We call a lift $M_R\to \Spec R$ to characteristic zero a {\it projective model} of $M$ if $M_R\to \Spec R$ is projective.

\subsection{Automorphisms} \label{ss:auto}

Let $M$ and $S$ be schemes and $\varphi : M \to S$ be a morphism.
We denote by ${\rm Aut}\, (M/S)$ the {\it group of automorphisms of}  $M$ over $S$.  When $\varphi$ is flat projective, ${\rm Aut}\, (M/S)$  is the group of $S$-points of a group scheme representing the ${\rm Aut}(-/S)$-functor, but we will just use the abstract group ${\rm Aut}\, (M/S)$. If $S$ is the spectrum of a ring $R$, we also write ${\rm Aut}(M/R)$. Finally for $S=\Spec k$, we write ${\rm Aut}(M)$ instead of ${\rm Aut}(M/k)$ if there is no danger of confusion.

\subsection{K3 surfaces and automorphisms.} \label{ss:K3} 
A {\it K3 surface} $V$  {\it over a field} $F$ is a smooth projective geometrically irreducible $2$-dimensional variety defined over $F$ such that $H^1(V, {\mathcal O}_V) = 0$ and the dualizing sheaf $\omega_V$ is trivial, i.e., $\omega_V \simeq {\mathcal O}_V$. As  a smooth proper scheme  of dimension $2$ over a field is necessarily projective, we stick to the notion of 'projective' surface rather than 'algebraic' surface.

\medskip

 {\it Thoughout this article,   $X $ is a K3 surface over an algebraically closed field   $k$  of characteristic 
$p >0$}.  A model $X_R\to \Spec R$ of a K3 surface has the property that $X_K/K$ is a $K3$ surface.

\medskip

 The {\it  N\'eron-Severi group  ${\rm NS}\, (X)$} of $X$  is isomorphic to 
${\rm Pic}\, (X)$, and it is a free $\Z$-module 
of finite rank. The rank is called the {\it  Picard number} of $X$ and denoted by $\rho(X)$. We have $1 \le \rho(X) \le 22$.   The Hodge index theorem implies that the  N\'eron-Severi group ${\rm NS}\, (X)$ 
is an even  hyperbolic lattice with respect to the intersection from $(*, **)$,  i.e., $(*, **) \in  {\rm Sym}^2 ({\rm NS}(X)^\vee)$,  
 of signature $(1, \rho(X) -1)$ on ${\rm NS}\, (X) \otimes_{\Z} \R$. In  addition, 
$(x^2) := (x, x) \in 2 \Z$ for all $x \in {\rm NS}\, (X)$, as by the Riemann-Roch theorem $x^2=2( \chi(X, x) -\chi(X, \mathcal{O}_X))$.  We denote the group of isometries of $({\rm NS}\, (X), (*, **))$ by ${\rm O}\, ({\rm NS}\, (X))$.

\medskip

The action by pull-back of line bundles $L\mapsto f^*L$ defines a contravariant representation
$${\rm Aut}\, (X) \to {\rm O}\, ({\rm NS}\, (X)).$$ Let $f \in {\rm Aut}\, (X)$. The   {\it spectral radius} of $f^* \in {\rm O}\, ({\rm NS}\, (X))$, denoted by ${\rm sp}\, (f)$, is the maximum of the absolute values of eigenvalues of $f^* \otimes {\rm id}_{\C} \vert_{{\rm NS}\, (X) \otimes \C}$. Here and hereafter for a complex number $\alpha = a + b\sqrt{-1}$ ($a, b \in \R$) the absolute value $\vert \alpha \vert$ of $\alpha$ is the non-negative real number 
$$\vert \alpha \vert = \sqrt{a^2 + b^2}.\,\, $$ 
 One has  ${\det}\, f^* = \pm 1$. Thus, ${\rm sp}\, (f) \ge 1$. One defines $f$ to be of {\it positive entropy} (resp. of {\it null-entropy}) if 
${\rm sp}\, (f) > 1$ (resp. ${\rm sp}\, (f) =1$). The entropy of $f$ is defined by 
$$h(f) = \log\, {\rm sp}\, (f)\,\, .$$
This definition is in fact equivalent to the one obtained by first defining the entropy as the natural logarithm of the maximum of absolute values of the eigenvalues of $f^*$ acting of the $\ell$-adic cohomology ring $\oplus_i H^i(X, \mathbb{Q}_\ell)$,  with respect to all complex embeddings of the eigenvalues, as it is shown in \cite{ EsnSri13}  that regardless of the choice of the complex embedding, this maximum is taken on the N\'eron-Severi group.
This is also consistent with  the notion of entropy for complex projective K3 surfaces. Note that if $f$ is of positive entropy, then $f$ is of infinite order, while the converse is not true in general. Let $G \subset {\rm Aut}\, (X)$ be a subgroup. We call $G$ {\it of null-entropy} (resp. {\it of positive entropy}) if all the elements of $G$  are of null-entropy (resp. some element of $G$ is of positive entropy).

 \subsection{Specializations.}  \label{ss:sp}
 Let $X_R\to \Spec R$ be a smooth proper  morphism. 
 Recall  (\cite[X,~App.]{SGA6}) that  one has a {\it specialization homomorphism }
 $sp: \Pic(X_{\bar K})\to \Pic(X)$ {\it on the Picard group},  which is defined as follows:  any $\sL_{\bar K}\in \Pic(X_{\bar K})$ is defined over a finite extension $L\supset K$, $L\subset \bar K$, so $\sL_{\bar K}=\sL_L\otimes_L \bar K$. Let $R_L\subset L$ be the ring of integers. The restriction homomorphism $\Pic(X_{R_L})\to \Pic(X_L)$ is an isomorphism as $X_{R_L}$ is smooth. So $\sL_L=\sL_{R_L} \otimes L$. Then the specialization of $\sL_{\bar K}$ is $\sL_{R_L}\otimes k$.   
 
 The specialization factors through the N\'eron-Severi group $sp_{NS}: NS(X_{\bar K})\to NS(X_k)$ and through the N\'eron-severi group modulo torsion $sp_{NS/{\rm torsion}}:  NS(X_{\bar K})/{\rm torsion} \to NS(X_k)/{\rm torsion}$. Then $sp_{NS/{\rm torsion}}$ is injective, as $sp$ is compatible with the  injections  $NS(X_{\bar K})/{\rm torsion}\to H^2(X_{\bar K}, \Q_\ell(1)), \  NS(X)/{\rm torsion} \to H^2(X, \Q_\ell(1))$ defined by the Chern class  and the specialization
 $H^2(X_{\bar K}, \Q_\ell(1))\to H^2(X, \Q_\ell(1))$
  on $\ell$-adic cohomology, which is an
 isomorphism
  by the smooth proper base change theorem (\cite[V,~Thm.~3.1]{SGA4.5}).
 
 \medskip
 
 Let $X_R\to \Spec R$ be a  model of a K3 surface $X$.
 Then  one has a {\it restriction homomorphism  } ${\rm Aut}(X_R/R)\to {\rm Aut}(X)$. 
 Let us define the subset ${\rm Aut}^e (X_{\bar K}/{\bar K})\subset {\rm Aut}(X_{\bar K}/{\bar K})$ consisting of those automorphisms which lift to some model $X_R\to {\rm Spec} \ R$. (Here {\it e}  stands  for extendable).   It is clearly a subgroup, where the group law is define after  finite base change.  Then the restriction homomorphism yields a {\it specialization homomorphism} 
 \ga{}{\iota: {\rm Aut}^e(X_{\bar K}/\bar K)\to {\rm Aut}(X/k) . \notag} 
Moreover, $sp$ is equivariant under $\iota$.
   In addition,  as automorphisms are recognized on the associated formal scheme, and $H^0(X, T_{X/k})=0$, the specialization homomorphism 
$\iota$ is injective (see \cite[~Lem.~2.3]{LieMau11}) .
 
 We call  $f$  {\it geometrically liftable} if 
it is in the image of the specialization homomorphism $\iota$.
One  similarly defines  geometric liftability of a  subgroup $G\subset {\rm Aut}(X)$.

 \begin{remark}\label{biraut} It is natural to ask whether the subgroup ${\rm Aut}^e(X_{\bar K}/\bar K) \subset {\rm Aut}(X_{\bar K}/\bar K)$  is a strict subgroup.  We  give an explicit example for which  ${\rm Aut}^e(X_{\bar K}/\bar K) \not= {\rm Aut}(X_{\bar K}/\bar K)$ in 
Theorem~\ref{NoLiftTwoDegreeTwo}.
  Any $f \in {\rm Aut}\, (X_{\bar K}/\bar K)$ extends uniquely to $\tilde{f} \in {\rm Bir}\, (X_{R_L}/R_{L})$, the group of birational automorphisms, where $L$, with $K \subset L \subset \bar{K}$,  is a field of definition of $f$.   As $X$ is regular, if $\tilde{f}$ is not regular, then there is a $1$-dimensional subscheme $C\subset X$ such that $\tilde{f}$ is well defined as a morphism $\tilde{f}: X_{R_L}\setminus C\to X_{R_L}$, but the morphism does not necessarily extend to $X_{R_L}$. Finally this implies that for any field extension $L'\supset L$, $L'\subset \bar K$, the base changed morphism   $\tilde{f}\otimes R_{L'}:  X_{R_{L'}}\setminus C\to X_{R_{L'}}$ does not extend either.
 See \cite[Prop.~4.1]{LieMat14} for related phenomena. \end{remark}

\section{Automorphisms of even hyperbolic lattices} \label{sec:Salem}

We call a polynomial $P(x) \in \Z [x]$ a {\it Salem polynomial} if it is irreducible, monic,  of even degree $2d \ge 2$ and the complex zeroes of $P(x)$ are of the form ($1 \le i \le d-1$):
$$a > 1\,\, ,\,\, 0 < a^{-1} < 1\,\, ,\,\, \alpha_i, \overline{\alpha}_i \in S^1 := 
\{z \in \C\, \vert\, \vert z \vert = 1\} \setminus \{\pm 1\}\,\, .$$

\begin{proposition}\label{salem} Let $r$ be a positive integer and $L = (\Z^r, (*,**) \in {\rm Sym}^2 (\Z^r)^{\vee})$ be a hyperbolic lattice, i.e., the bilinear form $(*,**)$ is non-degenerate of signature $(1, r-1)$. Let $C := \{x \in L \otimes \R\, \vert\, (x^2) > 0\}$. Then $C$ has exactly two connected components, say $C^0$ and $-C^0$. Let $f \in {\rm Aut}\, L$ and assume that $f(C^0) \subset C^0$. Then, the characteristic polynomial of $f$ is the product of cyclotomic polynomials and at most one Salem polynomial. In particular,  when $f(C^0) \subset C^0$,
 the characteristic polynomial of $f$ is the product of cyclotomic polynomials if and only if $f$ is of null-entropy  if and ony if  $f$ is quasi-unipotent, 
  i.e., all the eigenvalues of $f^n$ are $1$ for some positive integer $n$.
\end{proposition}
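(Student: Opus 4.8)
The plan is to read the eigenvalue structure of $f$ acting on $V:=L\otimes\R$ off the two constraints imposed by the isometry property. First I would use that $f$ preserves $(*,**)$: writing $G$ for the Gram matrix, $f^{\rm t}Gf=G$ gives $f^{\rm t}=Gf^{-1}G^{-1}$, so $f^{\rm t}$ is conjugate to $f^{-1}$ and the multiset of eigenvalues of $f$ is stable under $\lambda\mapsto 1/\lambda$; equivalently $\chi_f$ is self-reciprocal. Next, for eigenvalues $\lambda,\mu$ with $\lambda\mu\neq 1$ the generalized eigenspaces $V_\lambda,V_\mu$ are orthogonal for the form, since $(fv,fw)=(v,w)$. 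Hence $W:=\bigoplus_{|\lambda|>1}V_\lambda$ is totally isotropic. Because the form has signature $(1,r-1)$, a totally isotropic subspace has dimension at most $\min(1,r-1)\le 1$, so $\dim W\le 1$. This forces $f$ to have at most one eigenvalue $\lambda$ with $|\lambda|>1$, occurring with $V_\lambda$ one-dimensional and hence without a Jordan block, and such a $\lambda$ must be real, since a non-real $\lambda$ would force $\bar\lambda$ into $W$ as well and give $\dim W\ge 2$.

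Secondly I would pin down the sign using the hypothesis $f(C^0)\subset C^0$. The closure $\overline{C^0}$ is a proper closed convex cone preserved by $f$, so by the Perron--Frobenius / Krein--Rutman principle for cone-preserving linear maps the spectral radius ${\rm sp}(f)$ is itself an eigenvalue, with an eigenvector lying in $\overline{C^0}$; in particular the dominant eigenvalue is real and non-negative. Combined with the previous paragraph, either every eigenvalue lies on the unit circle, so ${\rm sp}(f)=1$, or there is exactly one real eigenvalue $\lambda>1$, its reciprocal $1/\lambda\in(0,1)$ is the unique eigenvalue of modulus $<1$, and all remaining eigenvalues lie on the unit circle.

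The main point is then to organize the factorization of $\chi_f\in\Z[x]$ into irreducibles. By Kronecker's theorem, any monic factor in $\Z[x]$ all of whose roots lie on the unit circle is a product of cyclotomic polynomials; hence every irreducible factor of $\chi_f$ other than the minimal polynomial $S$ of $\lambda$ is cyclotomic. It remains to see that $S$ is a Salem polynomial, and the step I expect to be the crux is showing that $1/\lambda$ is a Galois conjugate of $\lambda$, i.e.\ a root of $S$. I would argue by integrality: if $1/\lambda$ were not a root of $S$, then all roots of $S$ except $\lambda$ would lie on the unit circle, so $|S(0)|=\lambda$; as $S(0)\in\Z$ this would make $\lambda$ a rational integer $\ge 2$, whence $1/\lambda$ would be a non-integral rational root of the monic integer polynomial $\chi_f$, which is impossible. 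Thus $S$ has both $\lambda$ and $1/\lambda$ as roots and all of its other roots on the unit circle; since $S$ is irreducible of degree $>1$ it cannot have $\pm1$ as a root, so its unit-circle roots pair up as $\{\alpha,\bar\alpha\}$ and $\deg S$ is even. This is precisely a Salem polynomial, proving the first assertion, with a Salem factor present exactly when ${\rm sp}(f)>1$.

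Finally, the stated equivalences follow by assembling these facts. By the second paragraph, $\chi_f$ is a product of cyclotomic polynomials if and only if $f$ has no eigenvalue off the unit circle, if and only if ${\rm sp}(f)=1$, that is $f$ is of null-entropy. If $\chi_f$ is a product of cyclotomic polynomials then every eigenvalue is a root of unity, so taking $n$ to be the least common multiple of their orders makes every eigenvalue of $f^n$ equal to $1$, i.e.\ $f$ is quasi-unipotent; conversely, if $f$ is quasi-unipotent its eigenvalues are roots of unity, hence lie on the unit circle, so ${\rm sp}(f)=1$ and, by Kronecker again, $\chi_f$ is a product of cyclotomic polynomials. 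This closes the three-way equivalence.
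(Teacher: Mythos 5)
Your proof is correct, but note that the paper does not actually prove this proposition: its ``proof'' consists of the single sentence that the statement is well-known and essentially due to McMullen \cite{Mc02}, with a pointer to \cite{Og10}. So your argument is not so much a different route as the only route on display, and it is essentially the standard one behind McMullen's lemma: reciprocal symmetry of the spectrum from $f^{\rm t}Gf=G$; orthogonality of generalized eigenspaces for $\lambda\mu\neq 1$, forcing the span $W$ of the generalized eigenspaces with $|\lambda|>1$ to be isotropic and hence at most a line; cone preservation to make the dominant eigenvalue a real number $>1$; and Kronecker plus the integrality trick to show $1/\lambda$ is a Galois conjugate of $\lambda$. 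Two small points deserve a word of care. First, the bound ``a totally isotropic subspace has dimension at most $\min(1,r-1)$'' is a statement about \emph{real} subspaces; the complexified form on $L\otimes\C$ admits isotropic subspaces of dimension up to $\lfloor r/2\rfloor$, so as literally written the dimension count does not apply to $W\subset L\otimes\C$. The fix is the conjugation symmetry you invoke one sentence later: $W$ is stable under complex conjugation (conjugate eigenvalues have equal modulus), hence is the complexification of a real subspace, to which the signature bound does apply. Second, to get ``at most one Salem polynomial'' as a statement about the factorization with multiplicity, you should record that $\lambda$ has algebraic multiplicity one (which is exactly your observation that $V_\lambda$ is one-dimensional), so the Salem factor divides $\chi_f$ exactly once. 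With those two sentences added, your proof is complete and self-contained, which is more than the paper offers.
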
    
\begin{proof} This is well-known and essentially due to McMullen \cite{Mc02}. See also \cite{Og10}.
\end{proof} 
One way for an automorphism $f$  of an hyperbolic lattice to perserve $C^0$ is to fulfill $f(e)=e$ for a non-zero isotropic vector $e$.
\begin{remark}\label{entropy} 
 For $f$ as in Proposition~\ref{salem}, we define (by a slight abuse of notation)  $f$  to be  of positive entropy (resp. of null entropy) if ${\rm sp}\, (f) > 1$ (resp. ${\rm sp}\, (f) =1$). Thus $f$ is of positive entropy (resp. of null entropy) if and only if the characteristic polynomial of $f$ has a  Salem factor (then exactly one)  (resp. only cyclotomic factors). 
\end{remark}

\begin{proposition}\label{nullentropy} Let  $L$ be as in Proposition~\ref{salem} and $f$ be in ${\rm Aut}(L)$. 
  Assume that there is $e \in L \setminus \{ 0\}$ 
such that $f(e) = e$ with $(e^2) :=(e,e) = 0$. Then the characteristic polynomial of $f$ is the product of cyclotomic polynomials.  
\end{proposition}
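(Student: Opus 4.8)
The plan is to produce an $f$-stable filtration of $L$ whose graded pieces each carry a trivial or finite-order action, so that every eigenvalue of $f$ is a root of unity; since the characteristic polynomial lies in $\Z[x]$, it must then be a product of cyclotomic polynomials, which is exactly the claim.

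After replacing $e$ by the primitive vector on its ray (still fixed by $f$ and still isotropic), I would work with the saturated sublattice $e^\perp=\{x\in L:(x,e)=0\}$. Since $f$ is an isometry fixing $e$, it preserves $e^\perp$, giving the $f$-stable flag
\[
0\subset \langle e\rangle \subset e^\perp \subset L,
\]
and I would read off the three induced actions. On $\langle e\rangle$ the action is the identity. The quotient $L/e^\perp$ is one-dimensional, and the form gives a perfect pairing between $L/e^\perp$ and $\langle e\rangle$, namely $(\bar x,e)\mapsto (x,e)$; invariance of the form together with $f(e)=e$ then forces $f$ to act by $1$ on $L/e^\perp$ as well.

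The essential point is the middle quotient $\overline N:=e^\perp/\langle e\rangle$. Because $e$ is a nonzero isotropic vector, $\langle e\rangle$ is precisely the radical of the form restricted to $e^\perp$, so $\overline N$ inherits a non-degenerate form of rank $r-2$; splitting off a hyperbolic plane $\langle e,e'\rangle$ with $(e,e')\neq 0$ shows that this induced form has signature $(0,r-2)$, i.e.\ it is negative definite. The induced isometry $\bar f$ thus lies in the orthogonal group of a definite lattice, which is finite, so $\bar f$ has finite order and all its eigenvalues are roots of unity. Assembling the three pieces, every eigenvalue of $f$ is a root of unity, and integrality of the characteristic polynomial concludes the argument.

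The step demanding the most care is the signature bookkeeping for $\overline N$: one must verify that $\langle e\rangle$ is exactly the radical of $e^\perp$ and that the resulting rank-$(r-2)$ form is negative definite. Once this is secured, both the finiteness of the definite orthogonal group and the one-dimensional pairing computation are routine. Alternatively, the Remark after Proposition~\ref{salem} shows that $f$ preserves $C^0$, so Proposition~\ref{salem} already restricts the characteristic polynomial to cyclotomic factors times at most one Salem factor; a Salem factor would supply an eigenvalue $a>1$ with isotropic eigenvector $v^+$ and a reciprocal eigenvector $v^-$, spanning a hyperbolic plane orthogonal to $e$, which would place the isotropic $e$ in a negative-definite complement and force $e=0$ — the same contradiction in a different guise.
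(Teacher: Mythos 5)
Your proof is correct, and its core is the same as the paper's: after making $e$ primitive you pass to the $f$-stable flag $\Z e\subset e^{\perp}\subset L$ and observe that the middle quotient $e^{\perp}/\Z e$ carries a negative definite form. Where you diverge is the endgame. The paper only records that the eigenvalues of $\bar f$ on the definite piece have absolute value $1$ (orthogonal matrix), recovers the last eigenvalue from $\det f=\pm 1$, and then has to invoke Proposition~\ref{salem} (via $f(C^0)\subset C^0$) to upgrade ``all eigenvalues on the unit circle'' to ``product of cyclotomic polynomials.'' You instead use that the orthogonal group of a definite \emph{integral} lattice is finite, so $\bar f$ has finite order and its eigenvalues are genuine roots of unity; together with the trivial actions on $\Z e$ and $L/e^{\perp}$ this makes every eigenvalue of $f$ a root of unity, and the cyclotomic factorization follows directly from integrality, with no appeal to Proposition~\ref{salem}. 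That is a slightly more self-contained route. One small inaccuracy: the pairing $L/e^{\perp}\times \Z e\to\Z$ is not in general perfect (it is only so for unimodular $L$); what you actually need, and what is true, is that $\bar x\mapsto (x,e)$ embeds $L/e^{\perp}$ $f$-equivariantly into $\Z$ with $f$ acting trivially on the target, which already forces $f=\id$ on the rank-one quotient. Your closing alternative, deriving a contradiction from a hypothetical Salem factor, is essentially the paper's argument in contrapositive form.
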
    

\begin{proof} 

We may assume without loss of generality that $e$ is primitive 
in the sense that $e$ is a part of $\Z$-basis of $L$. By the assumption, $f$ acts on the flag  $\Z e \subset (\Z e)^{\perp}$ and hence induces an automorphism $\bar f$ of   
$$\overline{L} := (\Z e)^{\perp}/\Z e\,\, .$$
The bilinear form of $L$ induces a bilinear form of 
$\overline{L}$ of signature $(0, r-2)$, i.e., $\overline{L}$ is  negative definite or $\{0 \}$. 
If  $ \overline{L} \neq 0$, 
 the eigenvalues of $\bar f$ on $\overline{L}$ are of absolute value $1$. Here we use the well-known fact that eigenvalues of a real orthogonal matrix are of absolute value $1$. Combining this with $f(e) = e$, we find that the eigenvalues of $f$ are of absolute value $1$ except perhaps one eigenvalue counted with multiplicities. Note that $\det\, f = \pm 1$, as 
an automorphism of free $\Z$-module $L$ of finite positive rank. Hence the last  eigenvalue is also of absolute value $1$. Since $f(C^0) \subset C^0$ as $f(e)= e$, this implies the result by Proposition~\ref{salem}.
\end{proof} 

\section{Lifting to characteristic $0$  K3 surfaces with Picard number one} \label{sec:lifting}

Let $M$ be a complex projective  K3 surface with a primitive ample line bundle $H$. Recall   $H$ is said to be {\it primitive}  if it is a part of $\Z$-basis of the finitely generated free $\Z$-module ${\rm Pic}\, (M) = {\rm NS}\, (M)$. 
 It is well-known that any generic  fiber ${\mathcal X}_t$ of the Kuranishi family $ \kappa: ({\mathcal X}, {\mathcal H}) \to {\mathcal K}$ of $(X, H)$ is of Picard number $1$. Here 'generic' is to be understood in the complex analytic sense.  In particular, restricting $\kappa$ to the germ of a smooth curve $\sB \subset \mathcal{K}$ containing a generic point, the kernel of the Gau{\ss}-Manin connection on $H^2$ on $\sB$ is spanned by the first de Rham Chern class of a generator of ${\rm NS}\,(\mathcal{X}_t)$.
  
  \medskip
  
In \cite[App.~A]{LieOls11},
   M. Lieblich and M. Olsson, 
   prove the analogous result on Picard  rank $1$ lifts to characteristic $0$ of characteristic $p\ge 3$ K3 surfaces.

\begin{theorem} \label{thm:one}
Let $X$ be a K3 surface defined over 
 an algebraically  closed field   $k$  of characteristic 
$p >0$, where $p>2$ if $X$ is  Artin-supersingular.
Then there is a discrete valuation ring $R$, finite over the ring of Witt vectors $W(k)$, together with a 
projective model $X_R\to \Spec R$, such that the Picard rank of $X_{ \bar K}$ is $1$, where $ K={\rm Frac}\,(R)$ and $\bar K\supset K$ is an algebraic closure. 
\end{theorem}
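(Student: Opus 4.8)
The goal is to lift a K3 surface $X$ over $k$ (of characteristic $p>0$, with $p>2$ in the supersingular case) to a projective model over a DVR finite over $W(k)$ so that the geometric generic fibre has Picard rank $1$. The plan is to use Deligne's theorem that the formal universal deformation space $\hat S$ of $X$ is formally smooth of dimension $20$ over $W(k)$, together with the structure of the loci $\hat\Sigma(X,L)$ over which a given line bundle $L$ lifts. By Deligne's result recalled in the introduction, each $\hat\Sigma(X,L)$ is a hypersurface in $\hat S$, flat over $W(k)$, hence of relative dimension $19$.

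First I would fix an ample line bundle $H$ on $X$ and consider the closed formal subscheme $\hat\Sigma(X,H)\subset\hat S$ parametrizing deformations along which $H$ stays algebraic; it is a relative hypersurface over $W(k)$ and is itself formally smooth (or at least has a component dominating $W(k)$). Restricting the universal formal family to $\hat\Sigma(X,H)$ gives a polarized formal deformation, which by Grothendieck's existence theorem together with the ampleness of $H$ algebraizes: there is a projective formal scheme, and a corresponding projective scheme over a complete local ring, carrying a relatively ample line bundle extending $H$. The next step is to cut down to a one-dimensional base: I would choose a formal curve $\Spec \hat R\to\hat\Sigma(X,H)$ through the closed point, dominating $\Spec W(k)$, chosen to be in "sufficiently general position" so as to avoid, for every line bundle $L$ not proportional to $H$, the locus $\hat\Sigma(X,L)$ except possibly at the closed point. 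After algebraization and normalization this produces the desired DVR $R$ finite over $W(k)$ and a projective model $X_R\to\Spec R$ with $H$ extending to a relatively ample bundle.

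The heart of the argument is controlling the Picard rank of the geometric generic fibre $X_{\bar K}$. By the injectivity of the specialization map $sp_{NS/\mathrm{torsion}}$ on Néron–Severi groups modulo torsion (recalled in Section~\ref{ss:sp}), one always has $\rho(X_{\bar K})\le\rho(X)$, and the class of $H$ survives, so $\rho(X_{\bar K})\ge 1$. To force equality $\rho(X_{\bar K})=1$ I would argue that any line bundle $L$ on $X_{\bar K}$ extends over the DVR and hence specializes to a line bundle on $X$ whose class lies in $\hat\Sigma(X,L)$ meeting the chosen base; a class $L_{\bar K}$ algebraic on the geometric generic fibre forces the base curve $\Spec\hat R$ to lie inside $\hat\Sigma(X,\lambda)$ for the corresponding specialized class $\lambda\in\mathrm{NS}(X)$. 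Since the hypersurfaces $\hat\Sigma(X,\lambda)$ for $\lambda$ ranging over a set of representatives not proportional to $[H]$ are countably many proper closed subschemes, genericity of the one-parameter base ensures none of them contains $\Spec\hat R$, leaving only multiples of $[H]$, i.e.\ $\mathrm{NS}(X_{\bar K})=\Z\cdot H$.

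The main obstacle, and the point requiring the most care, is the genericity/avoidance step: one must show a single formal arc (equivalently a single DVR $R$ finite over $W(k)$) can simultaneously avoid all the countably many hypersurfaces $\hat\Sigma(X,\lambda)$ with $\lambda$ not proportional to $[H]$. Over $W(k)$ the closed fibre is a single point, so naive Baire-category arguments over an uncountable field are unavailable and one must instead exploit the flatness and the relative dimension count: each $\hat\Sigma(X,\lambda)$ is a relative hypersurface, so imposing that $H$ lift while no other independent class lifts is one codimension-one condition, and a sufficiently general smooth formal curve in $\hat\Sigma(X,H)$ will avoid the countable union of the further codimension-one loci $\hat\Sigma(X,H)\cap\hat\Sigma(X,\lambda)$, which have codimension $\ge 1$ in $\hat\Sigma(X,H)$. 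Making this "sufficiently general" precise — producing an honest DVR rather than just a formal or transcendental arc, and handling the Artin-supersingular case where the condition $p>2$ intervenes through the height stratification of the formal Brauer group — is where the real work lies; the clean conceptual input is Deligne's hypersurface statement, and the rest is a careful transversality and algebraization argument.
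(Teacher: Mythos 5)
Your skeleton --- the formal deformation space, Deligne's hypersurfaces $\hat\Sigma(X,\lambda)$, a one-parameter base through the closed point avoiding all of them except the one attached to the chosen polarization, algebraization, and injectivity of specialization on N\'eron--Severi --- is the strategy of the paper and of \cite[App.~A]{LieOls11}. But the two steps that carry all the weight are exactly the ones you assert rather than prove. The first is your claim that $\hat\Sigma(X,H)\cap\hat\Sigma(X,\lambda)$ has codimension $\ge 1$ in $\hat\Sigma(X,H)$ for every $\lambda$ not proportional to $H$. Deligne's theorem says each $\hat\Sigma(X,\lambda)$ is a hypersurface flat over $W(k)$; it does not say two of them are distinct, and if $\hat\Sigma(X,H)\subseteq\hat\Sigma(X,\lambda)$ for some independent $\lambda$, then $\lambda$ lifts to every model on which $H$ lifts and the conclusion fails. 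This is precisely the content the paper supplies and you omit: one must first choose the polarization $L$ with $c_1^{\rm Hodge}(L)\neq 0$ (Proposition~\ref{existence}, using \cite{GK00}; this, and not the height stratification per se, is where $p>2$ in the Artin-supersingular case enters), which makes $\hat\Sigma(X,L)$ formally smooth, and then prove that the kernel of the Gau{\ss}--Manin connection on $H^2_{DR}$ of the universal family over $\hat\Sigma(X,L)$ is spanned by $c_1^{DR}(\sL)$ alone (Theorem~\ref{cor}, via the injectivity of the crystalline Chern class). Only this rank-one statement rules out an inclusion $\hat\Sigma(X,L)\subseteq\hat\Sigma(X,\lambda)$; starting from an arbitrary ample $H$, possibly with $c_1^{\rm Hodge}(H)=0$, the argument is not available.

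The second gap is the avoidance step itself. You rightly observe that a Baire-category argument over the residue field is unavailable when $k$ is countable (e.g.\ $k=\bar\F_p$), but ``exploit the flatness and the relative dimension count'' is not a substitute: a countable union of proper closed formal subschemes through the closed point can perfectly well contain every arc defined over a fixed countable base, so no dimension count alone produces the required DVR. The known resolution, carried out in \cite[App.~A]{LieOls11}, is to pass to the characteristic-zero generic fibre of the deformation space, where the offending loci become countably many proper closed subsets of a positive-dimensional rigid space over the complete characteristic-zero field ${\rm Frac}\,(W(k))$, and a completeness/Baire argument there yields a point defined over a finite extension of ${\rm Frac}\,(W(k))$, i.e.\ the desired $R$ finite over $W(k)$. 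As written, your proposal identifies the right difficulties but does not close either of them.
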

Their proof  (in $p \ge 3$) relies on \cite{Ogu79} and \cite{Ogu83} and properties of stacks for pairs of K3 surfaces together with line bundles. 

\medskip 
 
 For a line bundle $L$ on $X$, we denote by $c_1^{\rm Hodge}(L)\in H^1(X, \Omega^1_X)$ its Hodge Chern class.
\begin{proposition} \label{existence}
Let $X$ be a K3 surface  defined over
 an algebraically  closed field   $k$  of characteristic 
$p >0$,
with $p>2$  if     $X$ is  Artin-supersingular.  Then there is an ample primitive   line bundle $L$ such that $c_1^{\rm Hodge}(L)\neq 0$.  

\end{proposition}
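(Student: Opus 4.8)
The plan is to reduce the statement to the single assertion that the Hodge Chern class map
$c_1^{\rm Hodge}\colon {\rm NS}(X)\to H^1(X,\Omega^1_X)$ is \emph{not} identically zero, and then to extract an ample primitive class outside its kernel. For the reduction, observe that $c_1^{\rm Hodge}$ is additive and lands in a $k$-vector space of characteristic $p$, so that $c_1^{\rm Hodge}(pL)=p\,c_1^{\rm Hodge}(L)=0$; hence its kernel $N\subset {\rm NS}(X)$ is a subgroup containing $p\,{\rm NS}(X)$, and once $N\neq {\rm NS}(X)$ it has finite index at least $p$. The complement ${\rm NS}(X)\setminus N$ then contains a full coset $u+p\,{\rm NS}(X)$, so it meets every ball of radius at least $c\,p$ in ${\rm NS}(X)\otimes\R$, for a constant $c$ depending only on the lattice. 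Since the ample cone is a nonempty open cone, rescaling an ample class $x$ by $t\gg 0$ produces balls $B(tx,tr)$ inside it of arbitrarily large radius; such a ball contains an integral point $v\notin N$, i.e.\ an ample class with $c_1^{\rm Hodge}(v)\neq 0$. Writing $v=e\,v_0$ with $v_0$ primitive and $e\ge 1$, the class $v_0$ is again ample and $c_1^{\rm Hodge}(v)=e\,c_1^{\rm Hodge}(v_0)\neq 0$ forces $c_1^{\rm Hodge}(v_0)\neq 0$ (and in particular $p\nmid e$); thus $v_0$ is the desired ample primitive line bundle.

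The key input is therefore $c_1^{\rm Hodge}\not\equiv 0$. First I would record the compatibility of the de Rham cup product with the Hodge filtration on $H^2_{\rm dR}(X/k)$. The de Rham Chern class $c_1^{\rm dR}(L)$ lies in $F^1H^2_{\rm dR}(X/k)$, and its image in $F^1/F^2\simeq H^1(X,\Omega^1_X)$ is exactly $c_1^{\rm Hodge}(L)$; since $H^4_{\rm dR}=F^2H^4_{\rm dR}$ and $F^3H^4_{\rm dR}=0$, Poincaré duality gives $F^2=(F^1)^{\perp}$, so the cup product induces a pairing on $\gr^1=H^1(X,\Omega^1_X)$ valued in $H^2(X,\Omega^2_X)\simeq k$ for which $\langle c_1^{\rm Hodge}(L),c_1^{\rm Hodge}(L')\rangle=(L\cdot L')\bmod p$. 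Consequently, if $c_1^{\rm Hodge}\equiv 0$ then the intersection form on ${\rm NS}(X)$ vanishes identically modulo $p$, i.e.\ $p^{\rho}$ divides $\det{\rm NS}(X)$. In the Artin-supersingular case this is impossible: there $\rho=22$ while $\det{\rm NS}(X)=\pm p^{2\sigma_0}$ with Artin invariant $1\le\sigma_0\le 10$, so $2\sigma_0<\rho$. This settles the supersingular case, the hypothesis $p>2$ serving (through the results of Ogus invoked for Theorem~\ref{thm:one}) to guarantee that $H^2_{\rm crys}(X/W)$ is torsion free and that Hodge--de Rham degenerates, which is what legitimizes the filtration computation above.

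For $X$ of finite height the elementary argument is insufficient, since the intersection form can be divisible by $p$ even in Picard rank one (take $(H^2)$ divisible by $p$), so that $N$ might a priori equal all of ${\rm NS}(X)$. Here I would pass to crystalline cohomology: the de Rham Chern classes are the reductions modulo $p$ of the crystalline Chern classes in $H^2_{\rm crys}(X/W)$, which span the Tate submodule on which Frobenius acts by $p$, that is, the slope-one part, whereas the Hodge line $F^2$ reduces from the part of non-unit slope of the finite-height $F$-crystal $H^2_{\rm crys}(X/W)$ (Newton slopes $1-\tfrac1h,\,1,\,1+\tfrac1h$). The span of the de Rham Chern classes should therefore meet $F^2$ only in $0$, making $c_1^{\rm Hodge}$ \emph{injective} on ${\rm NS}(X)\otimes k$ and in particular nonzero on the ample generator. \textbf{The main obstacle} is precisely to make this transversality rigorous after reduction modulo $p$: the slope decomposition is clean only over ${\rm Frac}(W)$, and one must control the position of the reduced Hodge filtration relative to the reduction of the Tate submodule of $H^2_{\rm crys}(X/W)$. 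This is where the finite-height hypothesis and the careful crystalline bookkeeping (again the torsion-freeness of $H^2_{\rm crys}$) genuinely enter, and it is the step I expect to require the most care.
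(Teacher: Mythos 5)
Your overall architecture matches the paper's: reduce to showing $c_1^{\rm Hodge}\not\equiv 0$ on ${\rm NS}(X)$, then manufacture an ample primitive class outside the kernel. Your reduction step is correct, if more elaborate than necessary (the paper simply replaces a class $M$ with $c_1^{\rm Hodge}(M)\neq 0$ by $M+pmH$ for $H$ ample and $m\gg 0$, then divides by the content). In the supersingular case your route is genuinely different and self-contained: instead of citing van der Geer--Katsura, you use that the cup product on $H^1(X,\Omega^1_X)$ computes the intersection form mod $p$, so $c_1^{\rm Hodge}\equiv 0$ would force $p^{22}\mid\det{\rm NS}(X)=\pm p^{2\sigma_0}$ with $\sigma_0\le 10$, a contradiction. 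That argument is correct and arguably more transparent than the reference the paper invokes; note only that the role of $p>2$ is not torsion-freeness of $H^2_{\rm crys}$ or Hodge--de Rham degeneration (both hold for K3 surfaces in every characteristic), but the implication Artin-supersingular $\Rightarrow$ Shioda-supersingular, i.e.\ $\rho=22$ and $\det{\rm NS}(X)=-p^{2\sigma_0}$, which is exactly what your discriminant count consumes.

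The genuine gap is the finite-height case. There, as you correctly observe, the discriminant argument fails (the intersection form can vanish mod $p$ even for $\rho=1$), so one needs the crystalline input that $c_1^{\rm Hodge}\colon {\rm Pic}(X)/p\,{\rm Pic}(X)\to H^1(X,\Omega^1_{X/k})$ is injective for a K3 surface of finite height. You sketch a slope-transversality heuristic for this and then explicitly flag that making it rigorous after reduction mod $p$ is ``the main obstacle'': that admission is the missing step, and the heuristic as stated does not close it, since the Newton decomposition is only rational and a class $c_1^{\rm crys}(L)$ could a priori be congruent mod $p$ to an element of $\{x: Fx\in p^2H^2_{\rm crys}\}$ without lying in the slope-$<1$ part; controlling this requires Mazur-type divisibility arguments (or the formal Brauer group) that you do not supply. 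The paper resolves exactly this point by citing \cite[Prop.~10.3]{GK00}, which is precisely the injectivity statement you are trying to reprove. So your proposal is complete and partly improves on the paper in the supersingular case, but leaves the non-supersingular case resting on an unproved (and uncited) assertion.
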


\begin{proof}
Unfortunately, we can not prove it directly, this is the reason for the restriction on $p$.  If $X$ is not Artin-supersingular, then by \cite[Prop.~10.3]{GK00}, $$c_1^{{\rm Hodge}}: \Pic(X)/p\Pic(X)  \to H^1(X, \Omega^1_{X/k})$$ 
is injective.  Else, due to our assumption, it is Shioda-supersingular, 
thus by 
\cite[Prop.~11.9]{GK00}, $c_1^{{\rm Hodge}}$ is not identically zero.  Once one line bundle $M$ fulfills $c_1^{{\rm Hodge}}(M)\neq 0$, then given any ample line bundle $H$, $0\neq c_1^{{\rm Hodge}}(M)=c_1^{{\rm Hodge}}(M+mpH)$ for any integer $m$, and for $m$ large, $M+pmH$ is ample.  The $\Z$-module
$\Q\cdot ( M+pmH)\cap NS(X)$  of rank $1$ has a generator $L$ such $(M+mH)=aL$ where $a \in \N\setminus \{0\}$.
 Then $L$
is primitive, ample, and fulfills $c_1^{\rm Hodge}(L)\neq 0$.

\end{proof}

In fact, one can prove Theorem~\ref{thm:one} by using \cite{Del81} and Proposition~\ref{existence} only. Indeed, for $L$ as in Proposition~\ref{existence}, one shows (\cite[Prop.~2.2]{Ogu79} and \cite[Lem.~4.3]{LieMau11}) that the formal hypersurface $\Sigma(X,L)\subset \hat S={\rm Spf} \ W[[t_1,\ldots, t_{20}]]$, which is defined as the solution to the deformation functor of the pair $ (X,L)$, while $\hat S$ is defined as the solution of the deformation functor of $X$, is formally smooth.

Let $\hat X$ be the formal universal K3 surface over $\hat S$. Set $\hat Y=\hat X \times_{\hat S} \Sigma(X,L)$ and let $\sL$ be the formal universal line bundle on $\hat Y$ lifting $L$ on $X$. Then the de Rham class $c_1^{DR}(\sL)\in H^2_{DR}(\hat Y/\Sigma(X,L))$ is non-zero as its restriction in $H^1(X, \Omega^1_X)$ is non-zero.

  Then one shows, using  the injectivity of the crystalline Chern class map ${\rm Pic}(X)\to H^2_{DR}(\hat X_W/W)$ (\cite[2.10]{Del81})
\begin{theorem} \label{cor}
The kernel of the Gau{\ss}-Manin connection 
\ga{}{\nabla: H^2(\Omega^{\ge 1}_{\hat Y/\hat \Sigma}) \to
\Omega^1_{\hat \Sigma /W}  \otimes H^2_{DR}(\hat Y/\hat \Sigma) \notag}
is spanned by $c_1^{DR}(\sL)$ over $W$.

\end{theorem}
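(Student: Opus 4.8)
The plan is to pass to the flat (``crystalline'') trivialization of $H^2_{DR}(\hat Y/\hat\Sigma)$ and thereby turn the statement into a question about a hyperplane section of the period quadric. Write $M := H^2_{\rm cris}(X/W)$, a free $W$-module of rank $22$ equipped with the perfect cup-product pairing $\langle\,,\,\rangle$. Over $\hat S$ the Gau{\ss}-Manin connection identifies the flat sections of $H^2_{DR}(\hat X/\hat S)$ with $M$, and the Hodge filtration is encoded by the moving isotropic line $\ell_s := F^2_s = H^{2,0}_s \subset M\otimes\sO_{\hat S}$, with $F^1_s = \ell_s^{\perp}$ (the filtration is autodual, since $H^{2,0}$ is isotropic and $\dim F^1 = 22-1$). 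The Kodaira-Spencer isomorphism $T_{\hat S/W}\xrightarrow{\sim}\Hom(F^2,F^1/F^2)$ together with Griffiths transversality shows that the period map $\wp:\hat S\to Q$, $s\mapsto[\ell_s]$, into the quadric $Q=\{[\ell]\in\P(M):\langle\ell,\ell\rangle=0\}$ is a formal isomorphism onto the formal neighbourhood of $[\ell_0]$; here $Q$ is smooth over $W$ because $p\ge 3$ makes $2$ invertible. This package is the crystalline input provided by \cite{Del81} and \cite{Ogu79}, and I would cite it rather than reprove it.

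First I would record the easy inclusion. Since $\sL$ is an honest line bundle on the total space $\hat Y$, its relative de Rham Chern class $c_1^{DR}(\sL)$ is horizontal for $\nabla$ and of Hodge type $(1,1)$, hence lies in $F^1 = H^2(\Omega^{\ge 1}_{\hat Y/\hat\Sigma})$ and in $\ker\nabla$. Under the flat trivialization it corresponds to $\lambda := c_1^{\rm cris}(L)\in M$, which is nonzero by the injectivity of the crystalline Chern class map \cite[2.10]{Del81} (this is exactly what the hypothesis $c_1^{\rm Hodge}(L)\ne 0$ of Proposition~\ref{existence} secures, since $c_1^{\rm Hodge}(L)\ne 0$ forces $\lambda\notin pM$, so $\lambda$ is moreover primitive). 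Thus the kernel has rank at least one, generated by $c_1^{DR}(\sL)$.

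For the reverse inclusion I would compute the kernel explicitly. Over $\hat\Sigma$ the flat sections are again the constant sections $M$, and a constant $v\in M$ lies in $F^1_s = \ell_s^{\perp}$ for every $s\in\hat\Sigma$ precisely when $\langle v,\ell_s\rangle = 0$ for all $s$. Because $\hat\Sigma = \Sigma(X,L)$ is the locus where $\lambda$ stays in $F^1$ (\cite[Prop.~2.2]{Ogu79}, \cite[Lem.~4.3]{LieMau11}), $\wp(\hat\Sigma)$ is the formal neighbourhood of $[\ell_0]$ in the hyperplane section $Q' := Q\cap\P(\lambda^{\perp})$; note $\langle\lambda,\lambda\rangle = (L^2)>0$, so $\lambda$ is non-isotropic and $\lambda^{\perp}$ is a genuine hyperplane. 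Working over the fraction field $K_0 := {\rm Frac}(W)$, which has characteristic $0$, the form restricted to $\lambda^{\perp}$ is non-degenerate, so $Q'$ is a smooth, hence irreducible and non-degenerate, quadric of dimension $19\ge 1$ in $\P(\lambda^{\perp}\otimes K_0)$. Consequently no nonzero linear form on $\lambda^{\perp}$ can vanish on a formal neighbourhood of the smooth point $[\ell_0]$ (such a form would vanish on all of the irreducible $Q'$, contradicting non-degeneracy), so the lines $\ell_s$ for $s\in\hat\Sigma$ span $\lambda^{\perp}\otimes K_0$. Therefore $\langle v,\ell_s\rangle = 0$ for all $s$ forces $v\in(\lambda^{\perp})^{\perp}\otimes K_0 = K_0\cdot\lambda$, and the kernel has rank at most one.

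Combining the two inclusions, the kernel is free of rank one; being a saturated submodule (the kernel of a map into the torsion-free $\Omega^1_{\hat\Sigma/W}\otimes H^2_{DR}(\hat Y/\hat\Sigma)$) that contains the primitive class $\lambda$, it equals $W\cdot c_1^{DR}(\sL)$. The main obstacle is not this linear-algebra conclusion but the input of the first paragraph: transporting the complex-analytic picture---Kodaira-Spencer isomorphism, Griffiths transversality, and the identification of the period domain with the quadric---into the formal, mixed-characteristic setting over $W$, which is exactly what \cite{Del81} and \cite{Ogu79} supply. I would isolate only what is genuinely needed, namely that $\wp$ is a formal immersion and that $\hat\Sigma$ is its hyperplane section, rather than invoke the full crystalline Torelli theorem.
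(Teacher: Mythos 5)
The paper gives no written proof of Theorem~\ref{cor} beyond the one-line indication that it follows from \cite{Del81}, \cite{Ogu79} and the injectivity of the crystalline Chern class map; your argument --- flat (crystalline) trivialization $H^2_{DR}(\hat Y/\hat\Sigma)\cong M\otimes\sO_{\hat\Sigma}$, period map onto the formal neighbourhood of the quadric $Q$, identification of $\hat\Sigma$ with the hyperplane section $Q\cap\P(\lambda^{\perp})$, and the non-degeneracy/irreducibility argument over ${\rm Frac}(W)$ to bound the kernel by $K_0\cdot\lambda$ before saturating --- is exactly the intended route and is correct. The one step to keep explicitly attributed is the horizontal trivialization over $W[[t_1,\dots,t_{19}]]$ itself (parallel transport a priori needs divided powers), which is genuinely a theorem of \cite{Del81} and which you rightly cite rather than reprove.
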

From Theorem~\ref{cor}  one easily deduces Theorem~\ref{thm:one}. 

\begin{remark} \label{rmk:follow}
Given $X$ as in Theorem~\ref{thm:one}, and $L\in \Pic(X)$ as in Proposition~\ref{existence}, then  $X_R\to \Spec R$  of  Theorem~\ref{thm:one} is constructed in such a way   that $L$ lifts to $X_R$. \end{remark}
\section{No lifting of automorphisms}\label{sec:Nolifting}

 In this section, applying Theorem ~\ref{thm:one}, we construct a projective model 
 $X_R\to \Spec R$ 
 of a K3 surface $X$, with $K={\rm Frac}(R)$  of characteristic zero, for   which almost all automorphisms of $X$ are not geometrically liftable. 
  
\begin{theorem} \label{cor:entropy2}
Let $X$ be a K3 surface defined over
 an algebraic closed field  $k$ of  characteristic 
$p >0$,  where $p>2$ if $X$ is  Artin-supersingular. 
\begin{itemize}
\item[1)]  Assume  that either the Picard number of $X$ is $\ge 2$ or that ${\rm Pic}\, (X) = \Z \cdot  H$ and $H^2\neq 2$.
Then there is a DVR $R$, finite over $W(k)$,  together with a projective model $X_R\to \Spec R$ of $X\to \Spec k$ such that no subgroup $G \subset {\rm Aut}\, (X)$, except for 
$G = \{ {\rm id}_X \}$,  is geometrically liftable to $X_R\to \Spec R$;
\item[2)]  Assume  that
${\rm Pic}\, (X) = \Z \cdot  H$ and  $(H^2) = 2$.   Then, for any projective   model $X_R\to \Spec R$ with $R$ finite over $W(k)$,   ${\rm Aut}^e(X_R/R)={\rm Aut}(X_R/R)$, 
the specialization homomorphism $\iota: {\rm Aut}(X_{\bar K}) \to {\rm Aut}(X)$ is an isomorphism, and ${\rm Aut}(X)=\Z/2$. 

\end{itemize}
\end{theorem}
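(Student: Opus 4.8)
The whole argument rests on the classification of automorphisms of a K3 surface $Y$ of Picard number one over an algebraically closed field of characteristic $0$: if ${\rm NS}(Y)=\Z H_Y$ with $H_Y$ ample and primitive, then ${\rm Aut}(Y)=\{{\rm id}\}$ when $(H_Y^2)\ge 4$, while ${\rm Aut}(Y)=\Z/2$, generated by the covering involution of $|H_Y|:Y\to\P^2$, when $(H_Y^2)=2$. I would recall the proof in a line: an automorphism preserves the ample cone, hence fixes $H_Y$ and acts as $+{\rm id}$ on ${\rm NS}(Y)$; a nontrivial \emph{symplectic} one would then act trivially on the transcendental lattice as well and be the identity by Torelli, while a \emph{non-symplectic} one must be an involution whose invariant lattice $\Z H_Y$ is $2$-elementary, which by Nikulin forces $(H_Y^2)=2$. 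The exact analogue over $k$ (via the lattice-theoretic analysis on $\ell$-adic, resp. crystalline, cohomology) gives in particular ${\rm Aut}(X)=\Z/2$ in case $2)$, generated by the covering involution $\sigma$ of $|H|$.

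For part $1)$ I would first choose the polarization and then quote Theorem~\ref{thm:one}. I want a primitive ample $L$ with $c_1^{\rm Hodge}(L)\ne 0$ \emph{and} $(L^2)\ne 2$. If $\rho(X)=1$ and $(H^2)\ne 2$, the unique candidate $L=H$ works: Proposition~\ref{existence} produces a primitive ample class of nonzero Hodge Chern class, which in rank one can only be $H$. If $\rho(X)\ge 2$, then ${\rm NS}(X)$ is indefinite of rank $\ge 2$ and carries primitive ample classes of arbitrarily large self-intersection; using that $c_1^{\rm Hodge}(L_0+pA)=c_1^{\rm Hodge}(L_0)$ for ample $A$ (as $H^1(X,\Omega^1_{X/k})$ is a $k$-vector space, so $p$ kills it), one produces such an $L$ with $(L^2)\ge 4$. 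Feeding this $L$ into Theorem~\ref{thm:one} and Remark~\ref{rmk:follow} yields a projective model $X_R\to\Spec R$ with ${\rm NS}(X_{\bar K})=\Z\,\sL_{\bar K}$ of rank $1$ and with $L$ lifting. As $sp$ is an injective isometry onto its image and $L$ is primitive, $sp(\sL_{\bar K})=\pm L$, so $(\sL_{\bar K}^2)=(L^2)\ne 2$; the characteristic-$0$ classification then gives ${\rm Aut}(X_{\bar K})=\{{\rm id}\}$. Since ${\rm Aut}^e(X_{\bar K}/\bar K)\subset{\rm Aut}(X_{\bar K}/\bar K)$ and $\iota$ is injective, the image of $\iota$ is trivial, i.e. no subgroup $G\ne\{{\rm id}_X\}$ is geometrically liftable to this model.

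For part $2)$ the model is arbitrary, and the heart of the matter is to show that the degree-$2$ class $H$ \emph{itself} lifts to $X_R$ (otherwise the generic fibre would carry a polarization of square $\ge 8$ and have trivial automorphism group, making $\iota$ non-surjective). Since $X_R$ is projective, some relatively ample $\sA$ restricts to $aH$ on $X$ with $a\ge 1$. The formal lifting locus $\Sigma(X,aH)\subset\hat S$ is cut out by the requirement that the Chern class remain of Hodge type $(1,1)$, a condition \emph{linear} in the class; as $c_1(aH)=a\,c_1(H)$ with $a\ne 0$, one gets $\Sigma(X,aH)=\Sigma(X,H)$. The classifying map ${\rm Spf}\,R\to\hat S$ of $X_R$ factors through $\Sigma(X,aH)=\Sigma(X,H)$, so $H$ lifts to a line bundle $\mathcal H$ on $X_R$, restricting to a primitive ample $H_{\bar K}$ with $(H_{\bar K}^2)=2$ and ${\rm NS}(X_{\bar K})=\Z H_{\bar K}$. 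Thus ${\rm Aut}(X_{\bar K})=\Z/2=\langle\sigma_{\bar K}\rangle$, the covering involution. Finally $\mathcal H$ is relatively ample, so $\pi_*\mathcal H$ is locally free of rank $3$ and $X_R\to\P(\pi_*\mathcal H)\cong\P^2_R$ is finite flat of degree $2$; its deck involution $\sigma_R\in{\rm Aut}(X_R/R)$ restricts to $\sigma_{\bar K}$ generically and to $\sigma$ on the special fibre. Hence ${\rm Aut}^e(X_{\bar K}/\bar K)={\rm Aut}(X_{\bar K}/\bar K)$, $\iota(\sigma_{\bar K})=\sigma$, and $\iota:\Z/2\to\Z/2$ is an isomorphism.

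The main obstacle is the identity $\Sigma(X,aH)=\Sigma(X,H)$ of part $2)$: it is exactly what forces the minimal polarization to survive on \emph{every} projective model, and it is the sole reason $\iota$ is surjective rather than trivial. A secondary difficulty is producing a genuinely \emph{regular} lift of the involution rather than a merely birational one (cf. Remark~\ref{biraut}); the relative double cover $X_R\to\P^2_R$ is what circumvents this. The construction of that double cover, and the classification ${\rm Aut}(X)=\Z/2$, use separability of $|H|$, so the characteristic-$2$ case (allowed here, since a surface of Picard number one is never supersingular) is the one requiring extra care.
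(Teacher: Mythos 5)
Your architecture coincides with the paper's. Part 1) is exactly its route: Proposition~\ref{existence} plus a lemma producing a primitive ample $L$ with $c_1^{\rm Hodge}(L)\neq 0$ and $(L^2)\neq 2$ (the paper's Lemma~\ref{trivial2} takes $L=pM+L_0$ with $\{L_0,M\}$ part of a $\Z$-basis, which is the precise version of your ``arbitrarily large self-intersection'' step), then Theorem~\ref{thm:one} with Remark~\ref{rmk:follow}, and the characteristic-$0$ triviality of ${\rm Aut}$ for Picard number one and degree $\neq 2$. For that last point the paper (Lemma~\ref{trivial}) argues via the discriminant groups ${\rm NS}(Z)^*/{\rm NS}(Z)\simeq T(Z)^*/T(Z)\simeq\Z/2d$, on which an involution would act as both $+{\rm id}$ and $-{\rm id}$; your route via the $2$-elementarity of the invariant lattice of a non-symplectic involution is an equivalent use of Nikulin, though you should justify why the non-symplectic part is at most an involution (the image in ${\rm O}(T(Z))$ is cyclic of order $N$ with $\varphi(N)$ dividing ${\rm rank}\,T(Z)=21$, which is odd). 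In part 2), your identification $\Sigma(X,aH)=\Sigma(X,H)$ is a correct and welcome elaboration of a step the paper only asserts (``$H$ extends to a line bundle $H_R$ for any projective lift''); it works because $H^2_{DR}/F^1\cong H^2(\sO)$ is torsion free over $R$. The relative double cover $X_R\to\P^2_R$ and its deck involution are exactly the paper's construction.

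The genuine gap is your opening claim that ``the exact analogue over $k$ (via the lattice-theoretic analysis on $\ell$-adic, resp.\ crystalline, cohomology) gives ${\rm Aut}(X)=\Z/2$'' in case 2). The Nikulin--Torelli argument has no known analogue for a non-supersingular K3 surface in characteristic $p$: there is no global Torelli theorem there, no transcendental lattice carrying an irreducible Hodge structure, and no substitute for the faithfulness of the action on $H^2$ or for the classification of non-symplectic actions (wild automorphisms are a real obstruction). The paper states this explicitly right after Lemma~\ref{trivial}: it does not know whether the Picard-number-one classification holds over $k$ of characteristic $p\ge 3$. So this step of your argument is unjustified as written; it does not affect part 1), where only the characteristic-$0$ classification of ${\rm Aut}(X_{\bar K})$ is used together with the injectivity of $\iota$, but it is exactly the input you invoke for the surjectivity of $\iota$ and for ${\rm Aut}(X)=\Z/2$ in part 2). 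To be fair, this is also the thinnest point of the paper's own proof: what the written argument actually delivers is ${\rm Aut}(X_{\bar K})=\langle\iota_{\bar K}\rangle\simeq\Z/2$, that this generator extends over $R$ as the deck involution of $\varphi_R$, and that it specializes to the covering involution of $\varphi$ --- hence ${\rm Aut}^e(X_{\bar K})={\rm Aut}(X_{\bar K})$ and $\iota$ is injective with image $\langle\iota\rangle$; the inclusion ${\rm Aut}(X)\subset\langle\iota\rangle$ over $k$ is not reduced there to a characteristic-$p$ Torelli theorem, and you should not pretend it can be.
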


\begin{proof}  First we prove 2).
 By replacing $H$ by $-H$ if necessary, we may assume that $H$ is ample. 
By Proposition ~\ref{existence} or, if $p \neq 2$, simply by $H^2(X, \Omega^2_{X/k}) \ni c_1^{{\rm Hodge}}(H)^{\cup 2}=$ residue class of $2$ in $k$, which is  thus non-zero, 
 $c_1^{\rm Hodge}(H) \not= 0$ and $H$ extends to a line bundle 
$H_R$ for any projective lift 
$X_R \to {\rm Spec}\, R$ to characteristic $0$. By \cite{SD74}, $h^0(X, H) = 3$, $h^i(X, H) = 0$ ($i \ge 1$), and $H$ is globally generated.  Strictly speaking,  $p \not= 2$ is assumed  in  \cite{SD74}. However, since $H$ is an ample generator, $h^0(H) \ge 3$ by the Riemann-Roch theorem. Moreover, any element in $\vert H \vert$ is irreducible and reduced. (Indeed, if  $C + D \in |H|$ for some non-zero effective divisors $C, D$, with possibly $C = D$, then $C \in |nH|$ and $D \in |mH|$ for some positive integers $n$, $m$ by ${\rm Pic}\, (X) = \Z\cdot  H$. 
However, then 
$2 \le n+m =1$, a contradiction.) So, we can apply \cite[Prop.~2.6, Thm.~3.1]{SD74}, which is characteristic free, to our $X$, to conclude that  $\vert H \vert$ defines a finite surjective morphism $\varphi : X \to \P^2_k$ of degree $2$. This is also separable even for $p=2$, as $X$ has no non-zero vector field by \cite{RS81}. We denote the covering involution by  $\iota\in {\rm Aut}\, (X)$. Since $h^i(X, H) = 0$ ($i \ge 1$), it follows that $H^0(X_R, H_R)$ is a rank $3$  free module over $R$, which satisfies base change. It thus defines a finite  surjective morphism $\varphi_R: X_R\to \P(H^0(X_R, H_R)^\vee)\cong \P^2_R$, of degree $2$, the specialization  of which over ${\rm Spec}\, k$ is $\varphi : X \to \P^2_k$.  We denote by   $\iota \in {\rm Aut}\, (X_R/R)$  the covering involution of $\varphi_R$, which exists as $\varphi_R$ is finite. Then  $\iota_R$ specializes to $\iota$.
 Hence, in the exceptional case ${\rm Pic}\, (X) = \Z \cdot  H$ with $(H^2) = 2$, the involution $\iota \in {\rm Aut}\, (X)$ lifts as an automorphism to any projective lift to characteristic $0$, in particular, $\iota$ is  geometrically liftable to any projective lift to characteristic zero.   More precisely, ${\rm Pic}\, (X_{\bar K}) = \Z \cdot  H_{\bar K}$, $(H_{\bar K}^2) = 2$ and ${\rm Aut}\, (X_{\bar K}) = \langle \iota_{\bar K} \rangle \simeq \Z/2$, where $\bar K$ is an algebraic closure of $K = {\rm Frac}\, (R)$ and $\iota_{\bar K}$ is the covering involution 
of the morphism $\varphi_{\bar K} : X_{\bar K} \to \P_{\bar K}^2$ given by $|H_{\bar K}|$.  This finishes the proof of 2).
\end{proof}
\begin{proof} We prove 1).
The following lemma  ought to  be well-known to the experts:
\begin{lemma}\label{trivial}
Let $Z$ be a complex projective K3 surface, with Picard group ${\rm Pic}\, (Z)$  generated by an ample class $H$. Assume that $(H^2) \not= 2$. Then ${\rm Aut}\, (Z) = \{\rm id_Z \}$. 
\end{lemma}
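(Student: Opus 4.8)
The plan is to reduce everything to the action on cohomology and invoke the strong Torelli theorem for complex K3 surfaces, which says that the representation ${\rm Aut}(Z) \to {\rm O}(H^2(Z,\Z))$, $f \mapsto f^*$, is injective (an automorphism inducing the identity Hodge isometry is the identity). So it suffices to prove $f^* = {\rm id}$ on $H^2(Z,\Z)$ for every $f \in {\rm Aut}(Z)$. First I would handle the algebraic part: $f^*$ is an isometry of ${\rm NS}(Z) = \Pic(Z) = \Z H$, so $f^*H = \pm H$, and since $f^*$ sends the ample class $H$ to an ample class we get $f^*H = H$; thus $f^*$ is the identity on ${\rm NS}(Z)$. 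It then remains to control $f^*$ on the transcendental lattice $T := {\rm NS}(Z)^\perp \subset H^2(Z,\Z)$, which has signature $(2,19)$ and $\rank\, T = 21$, since $H^2(Z,\Z)$ has signature $(3,19)$ and ${\rm NS}(Z)=\Z H$ has signature $(1,0)$.

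Next I would determine the eigenvalue of $f^*$ on the period. As $f$ preserves the holomorphic two-form, $f^*\omega_Z = \lambda\,\omega_Z$ with $\lambda \in \C^\times$, and $(\omega_Z,\overline{\omega}_Z) > 0$ forces $|\lambda| = 1$. Splitting $T_\R$ into the positive-definite plane $\langle {\rm Re}\,\omega_Z, {\rm Im}\,\omega_Z \rangle$ and its negative-definite orthogonal complement $T_\R \cap H^{1,1}$, both $f^*$-stable, shows that every eigenvalue of $f^*|_T$ has absolute value $1$; since $f^*|_T$ preserves the lattice $T$, its characteristic polynomial is monic in $\Z[x]$, so by Kronecker's theorem $\lambda$ is a root of unity, say a primitive $m$-th root. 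The key input, from the Lefschetz $(1,1)$-theorem, is that $T\otimes\Q$ contains no nonzero rational $(1,1)$-class; hence any $f^*$-stable rational sub-Hodge structure with vanishing $(2,0)$-part is zero. Decomposing $T\otimes\Q$ into the isotypic pieces $\ker\Phi_{m'}(f^*)$ (each a rational sub-Hodge structure, as $f^*$ respects the Hodge decomposition) and noting that $\omega_Z,\overline{\omega}_Z$ lie in the piece with $m'=m$, I conclude $T\otimes\Q = \ker\Phi_m(f^*)$. Thus the characteristic polynomial of $f^*|_T$ is a power of $\Phi_m$, so $\phi(m)$ divides $\rank\, T = 21$. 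Because $\phi(m)$ is even for $m \ge 3$ while $21$ is odd, necessarily $\phi(m)=1$, i.e. $\lambda = \pm 1$.

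If $\lambda = 1$, then $T\otimes\Q = \ker(f^* - {\rm id})$, so $f^* = {\rm id}$ on $T$ and hence on all of $H^2(Z,\Z)$, finishing the argument. The remaining case $\lambda = -1$ is the main obstacle, and it is precisely here that the hypothesis $(H^2)\neq 2$ must be used. If $\lambda = -1$ then $f^*|_T = -{\rm id}$, so $f$ is a non-symplectic involution whose invariant lattice is the primitive sublattice $L^+ = {\rm NS}(Z) = \Z H$. By Nikulin's theory of non-symplectic involutions — equivalently, the elementary fact that for any involution of the unimodular lattice $H^2(Z,\Z)$ the invariant sublattice is $2$-elementary — the discriminant group $\Z/(H^2)\Z$ of $\Z H$ must be $2$-elementary, which for the even positive integer $(H^2)$ forces $(H^2)=2$. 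This contradicts the hypothesis, so $\lambda = -1$ cannot occur. Hence every $f$ is symplectic with $f^* = {\rm id}$ on $H^2(Z,\Z)$, and ${\rm Aut}(Z) = \{{\rm id}_Z\}$.
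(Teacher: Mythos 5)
Your proof is correct, and it follows the same skeleton as the paper's (reduce to the action on the transcendental lattice $T(Z)$ of odd rank $21$, deduce that the eigenvalue on $\omega_Z$ is $\pm 1$, and rule out $-1$ by a discriminant/unimodularity argument that forces $(H^2)=2$), but the two key inputs are sourced differently. Where the paper first establishes that ${\rm Aut}\,(Z)$ is finite and then cites Nikulin's theorem on finite automorphism groups (cyclic image in ${\rm O}(T(Z))$ with $\varphi(N)$ dividing ${\rm rank}\, T(Z)$), you work one automorphism at a time and reprove the needed constraint from scratch: compactness of ${\rm O}(2)\times{\rm O}(19)$ plus Kronecker gives that $\lambda$ is a root of unity, and the Lefschetz $(1,1)$-theorem forces $T\otimes\Q$ to be $\Phi_m$-isotypic, whence $\varphi(m)\mid 21$. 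This buys you a self-contained argument that never needs finiteness of the automorphism group. For the endgame, the paper computes that $\iota^*$ acts as both $+{\rm id}$ (from ${\rm NS}(Z)^*/{\rm NS}(Z)$) and $-{\rm id}$ (from $T(Z)$) on the glue group $\Z/2d$, while you invoke $2$-elementarity of the invariant lattice of an involution of a unimodular lattice; these are two phrasings of the same fact, since $2(L^+)^*\subset L^+$ is exactly what makes ${\rm id}=-{\rm id}$ on the discriminant group. Both versions are complete; yours is marginally more elementary, the paper's is shorter because it leans on the cited results.
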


We do not know whether Lemma~\ref{trivial}  holds also for a K3 surface $X$ with $\rho(X) = 1$ in characteristic $p \ge 3$. However, if $X$ is a K3 surface defined over $\bar{\F}_p$, with $p\ge 3$, the positive solution to the Tate conjecture (\cite{MPe13}, also \cite{Ben14} with references therein) implies that  $\rho(X)$ is  even (\cite{Ar74}).

\begin{proof} We give a proof for the reader's  convenience. 

Since $H^0(Z, T_Z) = 0$ and ${\rm Aut}\, (Z)$ preserves the ample generator $H$, it follows that ${\rm Aut}\, (Z)$ is of dimension $0$ and also a closed algebraic subgroupscheme of the affine groupscheme ${\rm Aut}\, (\P (H^0(Z, mH)^\vee))$ for large $m >0$. It follows that ${\rm Aut}\, (Z)$ is a finite group. 

Let $T(Z)$ be the transcendental lattice of $Z$, that is, the orthogonal complement of ${\rm NS}\, (Z)$ in $H^2(Z, \Z(1))$. Thus the representation ${\rm Aut}\, (Z) \to {\rm O}\, ({\rm NS}\, (Z)) \times {\rm O}\,  (T(Z))$ has image in $\{1\}\times {\rm O}\,  (T(Z))$. On the other hand, since $Z$ is projective and ${\rm Aut}\, (Z)$ is a finite group,  by Nikulin \cite{Ni79}, the image of the natural map $G \to {\rm O}\, (T(Z))$ 
is a cyclic group of finite order $N$,  where  $N$ is exactly the order of  the image of the  representation ${\rm Aut}\, (Z) \to {\rm GL}\, (H^0(Z, \Omega_Z^2)) = \C \omega_Z,$ and  the Euler function $\varphi (N) := [\Q (e^{2\pi i/N}) : \Q]$ of $N$ 
is a divisor 
of the rank of the transcendental lattice $T(Z)$.  In our case, $T(Z)$  is of rank $21 = 22 -1$.  In particular, it is an odd number. Hence $N = 1$ or $2$. As $({\rm NS}(X)\oplus T(Z))\otimes_{\Z} \Q=H^2(Z, \Q(1))$ and ${\rm Aut}\, (Z)$ stabilizes  the lattice 
${\rm NS}( Z)\oplus T( Z)\subset H^2( Z, \Q(1))$, the image of the representation ${\rm Aut}\, (Z)\to GL(H^2(Z, \Q(1)))$ is cyclic of order $1$ or $2$, as well as the image of the representation $G\to GL(H^2( Z, \Z(1))$. 
 On the other hand, by the global Torelli theorem for complex projective K3 surfaces (\cite{PS71}), the action of ${\rm Aut}\, (Z)$ on $H^2(Z, \Z(1))$ is faithful. Thus ${\rm Aut}\, (Z)$ is either $\{{\rm  id}_Z \}$ or cyclic  of order $2$.

 So far, we did not use $(H^2) \not= 2$. Set $2d = (H^2) \ge 2$.  Then  by \cite[Cor.~1.6.2]{Ni79-2}, we have
$$\Z/2d \simeq {\rm NS}\, (Z)^*/{\rm NS}\, (Z) \simeq T(Z)^*/T(Z)\,\, ,$$
as $H^2(Z, \Z)$ is free and unimodular  and ${\rm NS}\, (Z)$ is primitive in $H^2(Z, \Z)$. Here $(-)^*$ means ${\rm Hom}_{\Z}((-), \Z)$  and $T(Z)=(\Z\cdot H)^\perp$.

As $\iota^*H$ is ample, and  $H$ is the unique ample generator of ${\rm Pic}\, (Z)$, one concludes that $\iota^*(H)=H$, thus 
$\iota^*={\rm id} $  on ${\rm NS}\, (Z)^*/{\rm NS}\, (Z)$. 
Hence $\iota^* = {\rm  id}$ on $T(Z)^*/T(Z)$ as well. On the other hand, the involution $\iota$ satisfies $\iota^* \omega_Z = -\omega_Z$ by Nikulin's result above. Thus 
$\iota^* = - {\rm id}$ on $T(Z)^*/T(Z)$. Thus ${\rm id} = -{\rm id}$ on $\Z/2d$. Hence $2d = 2$ as claimed.  \end{proof}

\begin{lemma}\label{trivial2}
Let $X$ be as in Theorem ~\ref{cor:entropy2}.  Assume that ${\rm Pic} (X) = {\rm NS}\, (X)$ 
is not isomorphic to $\Z\cdot  H$ with self-intersection number $(H^2) = 2$.  Then, there is an ample primitive line bundle $L$ such that $c_1^{\rm Hodge}(L) \not= 0$ and $(L^2) \not= 2$. 
\end{lemma}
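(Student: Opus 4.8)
The plan is to reduce to the case $\rho(X)\geq 2$ and then produce $L$ as the primitivisation of an explicit one-parameter family inside the ample cone, feeding on Proposition~\ref{existence} as the only nontrivial input.

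First I would dispose of $\rho(X)=1$. Then ${\rm Pic}(X)=\Z\cdot H$ (with $H$ ample after replacing it by $-H$ if needed), and the standing hypothesis of Lemma~\ref{trivial2} forces $(H^2)\neq 2$. Since the only primitive ample class is $H$ itself, the line bundle produced by Proposition~\ref{existence} must be $H$, so $c_1^{\rm Hodge}(H)\neq 0$ and $L:=H$ already does the job. Hence assume from now on $\rho(X)\geq 2$, and let $L_0$ be as in Proposition~\ref{existence}: ample, primitive, with $c_1^{\rm Hodge}(L_0)\neq 0$. If $(L_0^2)\neq 2$ we are done with $L:=L_0$, so assume $(L_0^2)=2$.

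The engine is the identity already used in Proposition~\ref{existence}, namely $p\,{\rm Pic}(X)\subseteq \ker c_1^{\rm Hodge}$, which holds because $H^1(X,\Omega^1_{X/k})$ is a $k$-vector space and $p=0$ in $k$; thus every $L$ with $L\equiv L_0 \pmod{p\,{\rm Pic}(X)}$ satisfies $c_1^{\rm Hodge}(L)=c_1^{\rm Hodge}(L_0)\neq 0$. I would then choose an integral ample class $w$ with $(L_0,w)^2\neq 2(w^2)$. Such a $w$ exists: since $\rho(X)\geq 2$ and $L_0$ is positive, $(L_0)^{\perp}$ is negative definite of positive rank, and for $u\in (L_0)^{\perp}\setminus\{0\}$ the point $w=L_0+\varepsilon u$ gives $(L_0,w)=2$ but $2(w^2)=4+2\varepsilon^2(u^2)\neq 4$ as $(u^2)<0$; since the condition is open and the ample cone is open of dimension $\geq 2$, it contains an integral ample class $w$ in that region. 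Writing $(w^2)=2s$ and $L_n:=L_0+p n w$, each $L_n$ is ample for $n\geq 0$ (a positive combination of ample classes), lies in the coset, and $m(n):=\tfrac12(L_n^2)=1+pn(L_0,w)+p^2n^2 s$ is an integer-valued quadratic in $n$ with discriminant $p^2\bigl((L_0,w)^2-2(w^2)\bigr)\neq 0$.

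To finish, recall that a quadratic polynomial with nonzero discriminant is not the square of a polynomial, so $m(n)$ is a non-square for infinitely many $n$. Fix such an $n\geq 1$ and write $L_n=a\,L$ with $L$ primitive and $a\geq 1$; then $L$ is ample. Because $L_n\equiv L_0\pmod p$ and $L_0$ is primitive, $p\nmid L_n$, hence $p\nmid a$, so $a$ is invertible in $k$ and $c_1^{\rm Hodge}(L)=a^{-1}c_1^{\rm Hodge}(L_n)=a^{-1}c_1^{\rm Hodge}(L_0)\neq 0$. Finally $(L^2)=2m(n)/a^2$ equals $2$ only if $m(n)=a^2$, which is impossible since $m(n)$ is not a perfect square; therefore $(L^2)\neq 2$, and $L$ is the required class. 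The one genuine difficulty is the tension between the three requirements at once: the Hodge condition confines us to the $p$-coarsened coset $L_0+p\,{\rm Pic}(X)$, primitivity is a congruence constraint, and ampleness is only an open condition that might meet such a coarse coset poorly. Moving along a true ample direction $w$ is exactly the device that resolves all three simultaneously, since it keeps $L_n$ ample for every $n\geq 0$ while forcing its self-intersection through a quadratic in $n$, and the discriminant computation is precisely what produces the infinitely many non-square values needed to survive primitivisation.
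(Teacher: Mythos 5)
Your argument is correct, but it takes a genuinely different route from the paper's. Both proofs start from Proposition~\ref{existence}, reduce immediately to the case $\rho(X)\ge 2$ with $(L_0^2)=2$, and exploit the fact that $p\,{\rm Pic}(X)$ dies under $c_1^{\rm Hodge}$. The paper, however, arranges primitivity \emph{by construction} rather than recovering it a posteriori: it completes $L_0$ to part of a $\Z$-basis $\{L_0,M\}$ of ${\rm Pic}(X)$, replaces $M$ by $M+nL_0$ with $n\gg 0$ to make it ample, and sets $L=pM+L_0$. This $L$ is primitive because its coordinates $(1,p)$ in the partial basis are coprime, ample as a sum of ample classes, has $c_1^{\rm Hodge}(L)=c_1^{\rm Hodge}(L_0)\neq 0$, and satisfies $(L^2)=p^2(M^2)+2p(M\cdot L_0)+(L_0^2)>2$ outright. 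You instead move along $L_0+pnw$ for a suitably generic integral ample $w$ and then primitivise, writing $L_n=aL$; this forces you to rule out $m(n)=a^2$, which is where you need the nonzero discriminant together with the classical fact that an integer-valued quadratic which is not the square of a polynomial takes non-square values for infinitely many $n$. That fact is true but not entirely trivial: when the leading coefficient is not a square the equation $m(n)=y^2$ is a Pell-type equation which can have infinitely many solutions (e.g.\ $2n^2+1$ is a square infinitely often), so one genuinely needs the density-zero statement, not merely that $m$ is not identically a square. The paper's choice of $L$ sidesteps both the primitivisation and this number-theoretic input, and yields the slightly sharper conclusion $(L^2)>2$; your version is valid but buys nothing extra here at the cost of an additional lemma you should cite or prove.
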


\begin{proof} By Proposition ~\ref{existence}, there is an ample primitive line bundle $L_0$ such that $c_1^{\rm Hodge}(L_0) \not= 0$. If $(L_0^2) \not= 2$, then we may take  $ L=L_0$. In particular, if $\rho(X) = 1$, then we are done, as we exclude the case ${\rm Pic}\, (X) = \Z\cdot L$ with $(L^2) = 2$. 

So, we may assume without loss of generality that $\rho(X) \ge 2$ 
and $(L_0^2) = 2$. Since $L_0$ is primitive and $\rho(X) \ge 2$, we can choose a line bundle $M$ such that 
$\{L_0, M\}$  is   part of  a $\Z$-basis 
of ${\rm Pic}\, (X)$. Replacing $M$ by $M + nL_0$ with large integer $n$, we may further assume, without loss of generality, that $M$ is also ample. Note here that $\Z \langle L_0, M \rangle = \Z \langle L, M +nL_0 \rangle$ and $L_0$ and $M$ remain part of free $\Z$-basis under this replacement. Now consider $L= pM + L_0$. Then $L$ is ample, as $M$ and $L_0$ are ample. $L$ is also primitive, as $L_0$ and $M$ form part of free $\Z$-basis of $\Z$-module ${\rm Pic}\, (X)$. Moreover, 
$$(L^2) = p^2(M^2) + 2p(M.L_0) + (L_0^2) > 2\,\, ,$$
by $(M^2) > 0$, $(M.L_0) >0$ and $(L_0^2) >0$ by the ampleness. Thus $(L^2) \not= 2$.  Moreover,
$$c_1^{\rm Hodge}(L) = pc_1^{\rm Hodge}(M) + c_1^{\rm Hodge}(L_0) = 0 + c_1^{\rm Hodge}(L_0) = c_1^{\rm Hodge}(L_0) \not= 0\,\, ,$$
in the $k$-vector space $H^1(X, \Omega_X^1)$. So, 
$L = pM +L_0$ satisfies all the requirements.
\end{proof}

Now we are ready to finish the proof of Theorem~\ref{cor:entropy2} 1).
We take the model $X_R\to \Spec R$ of Theorem~\ref{thm:one}, Remark~\ref{rmk:follow}, applied to the ample primitive line bundle $L$ in Lemma ~\ref{trivial2}.  Assume that $G$ is geometrically liftable to $X_R\to \Spec R$. Then $G$  
 has to stabilize $\Pic(X_{\bar K})$, thus it fixes the  polarisation $\sL_{\bar K}$. Hence $G$ is a finite group. So we may  assume that there is an abstract field isomorphism $\bar K\rightarrow \C$ and $X_{\bar K}$ is a complex projective K3 surface, say $Z$, with Picard group ${\rm Pic}\, (Z)$  generated by an ample class $H_Z$ with $(H_Z^2) \not= 2$, and $G$ is now a group of automorphisms of $Z$. Then $G = \{{\rm id}_{X}\}$ 
by Lemma ~\ref{trivial}. 

\end{proof}

\section{Non-liftable automorphism of positive entropy}\label{sec:NonLiftable}

The aim of this section is to construct an example  of an automorphism of a supersingular  K3 surface over an algebraically closed  field 
$k$ of characteristic $p \ge 3$,
which is not geometrically liftable to any projective model.  As far as we are aware of, this is  the first such example. 
 Our construction is based on the work by Kondo-Shimada \cite{KS12}, 
and is (mildly) computer supported. The characteristic $p$ will be equal to $3$.

\medskip

Recall that ${\rm det}\, {\rm NS}\, (X) = -p^{2 \sigma_0}$ for a supersingular K3 surface defined over $k$. The value $\sigma_0$ is called the {\it Artin invariant} of $X$. Artin \cite{Ar74} proved that $1 \le \sigma_0 \le 10$ and Ogus \cite{Ogu79} proved the uniqueness of a supersingular K3 surface over $k$ with $\sigma_0 = 1$,  up to isomorphisms. Then  this K3 surface  is isomorphic to the Kummer K3 surface ${\rm Km}\, (E \times_k E)$ associated to the product abelian surface $E \times_k E$ of any supersingular elliptic curve $E/k$. Tate and Shioda (\cite{Sh75}) proved that the Fermat quartic K3 surface is supersingular if and ony if $p \equiv 3\ {\rm mod}   \ 4$. There are several other descriptions of supersingular K3 surfaces of Artin invariant $1$ (see e.g. \cite{Sh13}).

\medskip

{\it From now until the end of this section, $X$ is a supersingular K3 surface, defined over $k$ of characteristic $3$, with Artin invariant $1$.} As remarked above, $X$ is isomorphic to the Fermat quartic K3 surface.
 We denote  by $q : X \hookrightarrow \P^3$ the projective embedding and set $H=q^*\sO_{\P^3}(1)$.
 
 \medskip

In \cite{KS12},  Kondo and Shimada prove the following  statements, which are crucial for our construction: 
\begin{itemize}
\item[(i)]$X$ has two  globally generated line bundles $L_i$ ($i = 1$, $2$) of degree $2$ (${\sL_{m_i}}$ in their notation  \cite[p.~19]{KS12}).  They are not ample. The linear system associated to $H^0(X, L_i)$ induces a well defined morphism $\varphi_i: X\to \P^2$ which is generically finite $2:1$, but not finite.  The Galois involution of the function field extension extends as  an automorphism $\tau_i \in {\rm Aut}\, (X/\P^2)$. Indeed, it clearly extends as an automorphism in ${\rm Aut}(Y_i/\P^2)$ where $\varphi_i: X\to Y_i\to \P^2$ is the Stein factorization, and on the other hand, $X$ is the minimal desingularization of  the surface $Y_i$.

\item[(ii)] Let ${\rm Aut}\, (X, H)$ be the automorphism group of $X$ induced by the projective linear automorphisms of $\P^3$ under $q$. It is known that ${\rm Aut}\, (X, H)$ is a finite group but  of huge order (see. e.g. \cite{DK09}, \cite{Mu88}). In ${\rm Aut}\, (X, H)$, there is a special element $\tau \in {\rm Aut}\, (X, H)$  of order $28$ (\cite[Ex.~3.4]{KS12}). 
\end{itemize}

They prove the following beautiful description of the automorphism group of $X$:

\begin{theorem} \label{KondoShimada}
${\rm Aut}\, (X) = \langle \tau_1, \tau_2, {\rm Aut}\, (X, H) \rangle$.
\end{theorem}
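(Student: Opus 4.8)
The plan is to reduce the statement to a lattice-theoretic computation by means of a Torelli-type theorem, and then to analyze the resulting chamber structure on the positive cone of ${\rm NS}\,(X)$ by Borcherds' method. First I would invoke the crystalline Torelli theorem for supersingular K3 surfaces (Ogus \cite{Ogu83}, in the automorphism-theoretic form). Since $X$ carries no nonzero global vector field (\cite{RS81}, as already used in the proof of Theorem~\ref{cor:entropy2}), the contravariant representation
\[
{\rm Aut}\,(X)\to {\rm O}\,({\rm NS}\,(X)),\qquad f\mapsto f^*,
\]
is injective, and its image is exactly the subgroup of isometries that preserve the ample cone (equivalently, permute the classes of smooth rational curves) and are compatible with the crystalline period. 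Because $X$ has Artin invariant $1$ its period is the canonical one \cite{Ogu79}, so this last condition imposes nothing beyond preservation of the ample cone. Thus it suffices to identify the group $\Gamma$ of isometries of ${\rm NS}\,(X)$ stabilizing the nef cone, and to prove that $\tau_1^*$, $\tau_2^*$ and $\{f^*:f\in {\rm Aut}\,(X,H)\}$ generate $\Gamma$; by injectivity this will give the asserted equality of groups.

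Second, I would make the nef cone explicit. The lattice ${\rm NS}\,(X)$ has signature $(1,21)$ and discriminant $-p^2=-9$; following Kondo--Shimada \cite{KS12} one embeds it into the even unimodular Lorentzian lattice ${\rm II}_{1,25}$, whose reflection group has the Conway fundamental chamber cut out by the Leech roots. This tessellation induces a decomposition of the positive cone of ${\rm NS}\,(X)$ into finitely many congruence classes of induced chambers, the nef cone being a union of such chambers whose walls are precisely the classes of smooth rational curves on $X$. Writing $W$ for the subgroup of ${\rm O}\,({\rm NS}\,(X))$ generated by reflections in $(-2)$-classes, the orthogonal group of the positive cone is the split extension of $W$ by the nef-cone stabilizer $\Gamma$, so that $\Gamma$ acts on the finite configuration of induced chambers contained in the nef cone. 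Enumerating these chambers and their adjacencies, and thereby reading off the symmetries of the configuration, is the combinatorial heart of the argument and is where the computer assistance enters.

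Finally, I would match the abstract generators of $\Gamma$ with the geometric automorphisms. The finite group ${\rm Aut}\,(X,H)$ is the stabilizer of the ample class $H$, hence realizes the symmetries of $\Gamma$ fixing the polarization and permuting the induced chambers through $H$; the two involutions $\tau_1,\tau_2$, coming from the non-finite degree-two maps $\varphi_i:X\to\P^2$, fix the degree-two classes $L_i$ and provide the passages between neighbouring induced chambers across the walls meeting $L_1$ and $L_2$. The remaining task is to verify that these elements already generate $\Gamma$: viewing the induced chambers as the vertices of a graph with edges given by shared walls, one checks that $\langle \tau_1,\tau_2,{\rm Aut}\,(X,H)\rangle$ acts transitively on the relevant chamber orbits and surjects onto the full wall-symmetry group, so that its image in ${\rm O}\,({\rm NS}\,(X))$ is all of $\Gamma$. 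The main obstacle is exactly this surjectivity statement, which rests on the explicit enumeration of the Leech-root chambers together with their symmetries, and this is precisely the point at which the Kondo--Shimada analysis and the supporting {\tt Mathematica} computation become indispensable.
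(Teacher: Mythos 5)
You should first be aware that the paper does not prove Theorem~\ref{KondoShimada} at all: it is quoted verbatim from Kond\={o}--Shimada \cite{KS12}, and the surrounding text only records the ingredients of their proof that the present paper actually uses (the explicit matrices $A_1$, $A_2$, $T$ on ${\rm NS}\,(X)$ in the basis of $22$ lines). So there is no in-paper argument to compare yours against; what you have written is a reconstruction of the proof in \cite{KS12}, and as such it correctly identifies the two pillars of that proof, namely Ogus's crystalline Torelli theorem \cite{Ogu83} and Borcherds' method via a primitive embedding of ${\rm NS}\,(X)$ into ${\rm II}_{1,25}$ and the induced Conway--Leech chamber decomposition of the positive cone.

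As a standalone proof, however, your outline has genuine gaps. First, the assertion that for Artin invariant $1$ the period condition ``imposes nothing beyond preservation of the ample cone'' is not justified and is not how \cite{KS12} proceeds: Ogus's theorem identifies the image of ${\rm Aut}\,(X)$ in ${\rm O}\,({\rm NS}\,(X))$ with the isometries preserving the ample cone \emph{and} acting compatibly on the period, which for $\sigma_0=1$ is a line in the $2$-dimensional discriminant group $(\Z/p)^2$ tensored with $k$; this is a nontrivial constraint (it is, for instance, the source of the order-$(p+1)$ character in Theorem~\ref{thm:jang}), and Kond\={o}--Shimada must and do track it when computing which lattice isometries are geometrically realized. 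Second, the ``finite configuration of induced chambers contained in the nef cone'' does not exist: since ${\rm Aut}\,(X)$ is infinite, the nef cone contains infinitely many induced chambers, and what is finite is the set of walls of a single chamber and the number of ${\rm Aut}\,(X)$-orbits of chambers; the correct statement of the generation criterion is that ${\rm Aut}\,(X)$ is generated by the stabilizer of one induced chamber together with elements carrying it to each of its (finitely many) adjacent chambers, once transitivity on chambers is established. Third, and decisively, the entire verification --- that $\langle \tau_1,\tau_2,{\rm Aut}\,(X,H)\rangle$ acts transitively on the induced chambers in the nef cone and contains the full stabilizer of one of them --- is exactly the content of the theorem and is left unexecuted; without carrying out that enumeration (which occupies the bulk of \cite{KS12}) the argument is a plan rather than a proof.
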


In the course of the proof, they  work with an explicit $\Z$-basis $\sB$ of ${\rm NS}\, (X)$, consisting of $22$ lines
among the 112 lines on $X$  (\cite[Lem. 6.3]{SSL10}) and compute the (right, hence covariant) representation of $(\tau_i)_* \vert_{ {\rm NS}\,(X)}$, $\tau_* \vert_{ {\rm NS}\,(X)}$ on ${\rm NS}\, (X)$. They actually write explicitely  the matrices  of $(\tau_i)_* \vert_{ {\rm NS}\,(X)}$ and $\tau_* \vert_{ {\rm NS}\,(X)}$  in the basis $\sB$.  We denote them by $A_1$, $A_2$, $T$  respectively, of which explicit forms are in  Tables 5.4, 5.5, 3.3 in \cite{KS12}. These forms are important  for the proof of Theorem \ref{KondoShimada}.

Suppose one has a model $X_R\to {\rm Spec} \ R$ on which $L_i$ lift to $L_{i, R}$. Then, as $H^0(X_R, L_{i, R})\otimes_R k=  H^0(X, L_i)$, one has base change $Y_{i, R}\otimes_R k=Y_i$ for the Stein factorization 
$\varphi_{i,R}: X_R\to Y_{i, R}\to \P^2_R$ 
of the well defined morphism $\varphi_{i, R}: X_R\to \P^2_R$ associated to $H^0(X_R, L_{i, R})$. 
The Galois involution of the function field extension induced by $\varphi_{i, R}$ extends as an automorphism 
$\tau_{Y_{i, R}} \in {\rm Aut}(Y_{i,R}/\P^2_R)$, which induces a birational automorphism  $\tau_{i,R}\in 
{\rm Bir}(X_R/\P^2_R)$. One says that $\tau_i$ {\it lifts to } $X_R$ if   $\tau_{i,R}\in {\rm Aut}(X_R/\P^2_R)\subset 
{\rm Bir}(X_R/\P^2_R)$.

\begin{theorem} \label{NoLiftTwoDegreeTwo}
There is a projective model $X_R\to \Spec \ R$  of $X = X(3)$ with $R$ of characteristic $0$ such that ${\rm Aut}^e(X_{\bar K}) \not= {\rm Aut}\, (X_{\bar K})$.  Here $K = {\rm Frac}(R)$ and $\bar K$ is an algebraic closure 
of $K$. 
\end{theorem}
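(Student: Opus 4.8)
The plan is to construct a projective model $X_R\to \Spec R$ whose geometric generic fibre $X_{\bar K}$ still carries the covering involution $\tau_{1,\bar K}$ associated to $L_1$, and then to show that this $\tau_{1,\bar K}$, although a genuine automorphism of $X_{\bar K}$, has no regular extension to the model. The engine of the non-liftability is the equivariance of $sp$ under $\iota$ recorded in Section~\ref{ss:sp}, combined with the explicit Kondo--Shimada matrices $A_1,A_2,T$ of $(\tau_1)_*,(\tau_2)_*,\tau_*$ on ${\rm NS}\,(X)$.

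I first build the model. A rank-one generic Picard group spanned by $L_{1,\bar K}$ is ruled out, since $L_1$ is \emph{not} ample on $X$ and such a model could not be projective; so I must keep ${\rm NS}\,(X_{\bar K})$ of rank two. Following the deformation-theoretic proof of Theorem~\ref{cor}, I lift $L_1$ and $L_2$ simultaneously: I intersect the two smooth formal hypersurfaces $\Sigma(X,L_1),\Sigma(X,L_2)\subset \hat S$ cut out by the two lifting conditions, and choose a generic formal arc $\Spec R\to \Sigma(X,L_1)\cap\Sigma(X,L_2)$ through the closed point which avoids the countably many further loci $\Sigma(X,v)$ with $v\notin \langle L_1,L_2\rangle$. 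Then ${\rm NS}\,(X_{\bar K})$ is the saturation of $\langle L_1,L_2\rangle$. The model is projective: $L_1$ and $L_2$ are globally generated, hence nef, and one checks that a suitable $a L_1 + b L_2$ is ample on the special fibre $X$ (the two maps $\varphi_1,\varphi_2$ do not contract a common curve); its lift is then relatively ample by openness of ampleness over the DVR.

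Because $L_1$ lifts and is globally generated, so is $L_{1,\bar K}$ (base-point freeness is open over $\Spec R$). It defines $\varphi_{1,\bar K}:X_{\bar K}\to \P^2$, and the covering involution of its Stein factorisation lifts through the minimal resolution to a regular automorphism $\tau_{1,\bar K}\in {\rm Aut}\,(X_{\bar K})$, exactly as $\tau_1$ arises on $X$. Suppose now, for contradiction, that $\tau_{1,\bar K}\in {\rm Aut}^e(X_{\bar K})$, i.e. its birational extension $\tau_{1,R}$ from Remark~\ref{biraut} is regular on some $X_{R_L}$. Its restriction to the special fibre is again the covering involution of $\varphi_1$, namely $\tau_1$. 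Equivariance of $sp$ under $\iota$ then forces $\tau_1^{*}$ to stabilise $sp({\rm NS}\,(X_{\bar K}))$, i.e. the rank-two lattice $\langle L_1,L_2\rangle$ (up to saturation) inside ${\rm NS}\,(X)$. This is where the computation enters: reading $(\tau_1)_*$ off $A_1$ in the basis $\sB$, one checks that $\tau_1^{*}(L_2)\notin \langle L_1,L_2\rangle$, so $\tau_1^{*}$ does \emph{not} preserve this lattice, a contradiction. Hence $\tau_{1,\bar K}\in {\rm Aut}\,(X_{\bar K})\setminus {\rm Aut}^e(X_{\bar K})$ and ${\rm Aut}^e(X_{\bar K})\neq {\rm Aut}\,(X_{\bar K})$.

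The main obstacle is the controlled rank-two lift. Since $X$ is supersingular, the Hodge--Chern classes $c_1^{\rm Hodge}(L_1)$ and $c_1^{\rm Hodge}(L_2)$ may be proportional, so the hypersurfaces $\Sigma(X,L_i)$ need not meet transversally; I must argue that, being distinct, they still intersect in codimension two and that a generic arc in the intersection has generic fibre of Picard rank exactly two. The remaining, purely computational, difficulty — the one supported by {\tt Mathematica} — is the verification that $\tau_1^{*}$ fails to stabilise $\langle L_1,L_2\rangle$, extracted from the matrices $A_1,A_2,T$; should $\langle L_1,L_2\rangle$ happen to be $\tau_1^{*}$-stable, one would instead lift $L_1$ together with the quartic polarisation $H$ and use $\tau_1^{*}(H)\notin \langle H,L_1\rangle$.
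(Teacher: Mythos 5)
Your strategy is genuinely different from the paper's, and as it stands it has two unresolved gaps. The paper does not attempt to control the generic Picard rank at all: it takes any projective model on which $L_1$, $L_2$ and $H$ lift (quoting \cite[App.~A]{LieOls11}), and argues that if \emph{both} covering involutions $f_1,f_2$ belonged to ${\rm Aut}^e(X_{\bar K})$ then, since each is anti-symplectic ($f_i^*\omega=-\omega$), the composition $f_1\circ f_2$ would fix $\omega$ and hence have eigenvalue $1$ on $H^2_{DR}(X_{\bar K}/\bar K)$, hence on $H_{{\rm \acute{e}t}}^2(X,\Q_\ell(1))$ by comparison and smooth proper base change; because ${\rm NS}(X)\otimes\Q_\ell$ is all of $H^2$ for the Artin-invariant-$1$ surface, the explicit characteristic polynomial of $A_1A_2$, which has no root at $1$, gives the contradiction. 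This only identifies that \emph{one} of the two involutions fails to extend, without saying which, but that suffices for the statement and it sidesteps both of your difficulties.

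The first gap is the controlled rank-two lift, which you flag but do not close: for a supersingular $X$ the loci $\Sigma(X,L)$ are smooth only when $c_1^{\rm Hodge}(L)\neq 0$ (which fails for most classes modulo $p$ on a supersingular surface), and to know that a generic arc in $\Sigma(X,L_1)\cap\Sigma(X,L_2)$ has geometric generic Picard lattice exactly the saturation of $\langle L_1,L_2\rangle$ you must show that no $\Sigma(X,v)$ with $v\notin\Q L_1+\Q L_2$ contains a component of that intersection. This is precisely the substantive content of Ogus's crystalline period theory as used in \cite[App.~A]{LieOls11}, not a routine avoidance of countably many closed subsets (note also that $k$ may be $\bar{\F}_3$, so even the avoidance step needs care). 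The second gap is that your contradiction rests entirely on the unverified assertion $\tau_1^*(L_2)\notin \Q L_1+\Q L_2$, with an equally unverified fallback for $\langle H,L_1\rangle$; nothing you cite guarantees either, and if both failed your argument would produce nothing. The paper's computation, by contrast, is a stated and checked property of $A_1A_2$ that is \emph{forced} to be relevant by the symplectic/anti-symplectic dichotomy. Until you either carry out the lattice non-stability check or restructure the argument around an invariant that must be preserved (such as the action on $\omega$), the proof is incomplete.
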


\begin{proof}  Let $H$ be an ample line bundle on $X$. By \cite[App.~A]{LieOls11}, there is a model $X_R\to \Spec \ R$ such that $L_1$, $L_2$ and $H$ lift.  This property is compatible with further base change $R\subset R_L$ where $L\supset K$ is a field extension with $L\subset \bar K$.
This model is projective. Let us denote by $f_i$ the restriction of $\tau_{i,R}$ to $X_{\bar K}$. Then $f_i \in {\rm Aut}(X_{\bar K})$ as $X_{\bar K}$ is a minimal smooth projective surface. It suffices to show that one of $f_i$ ($i =1$, $2$) is not in ${\rm Aut}^e(X_{\bar K})$. Assume to the contrary that both $f_i$ ($i =1$, $2$) are in ${\rm Aut}^e(X_{\bar K})$.
 Then   $ f_i^*\omega = -\omega,$ where $\omega$ is a non-zero  global $2$-form of $X_{\bar K}$. Then  $(f_1 \circ f_2)^*\omega = \omega$.  Thus $(f_1\circ f_2)^*$ has one  eigenvalue equal to one on de Rham cohomology $H^2_{DR}(X_{\bar K}/{\bar K})$, thus, by the comparison theorem, on $\ell$-adic cohomology $H_{{\rm \acute{e}t}}^2(X_{\bar K}, \Q_\ell(1))$ as well, thus by  (\cite[V,~Thm.~3.1]{SGA4.5}), on $H_{{\rm \acute{e}t}}^2(X, \Q_\ell(1))$ as well. 

On the other hand, using the explicit forms of 
$A_1$ and $A_2$  in Tables 5.4, 5.5 in \cite{KS12}, and {\tt Mathematica  (all we need here are Dot command, CharacteristicPolynomial command, Factor command}), we find that the characteristic polynomial of $(\tau_2 \circ \tau_1)^* \vert_{ {\rm NS}\,(X)}  = \tau_{1*} \circ \tau_{2*}|_{NS(X)}$, i.e., of $A_1A_2$, is
$$(1 + x + x^2) (1 - 11 x + 10 x^2 - 9 x^3 + 9 x^4 - 10 x^5 + 15 x^6 - 
   23 x^7 + 19 x^8 - 14 x^9 + 14 x^{10} - 14 x^{11} + 19 x^{12}$$
$$ - 23 x^{13} + 
   15 x^{14} - 10 x^{15} + 9 x^{16} - 9 x^{17} + 10 x^{18} - 11 x^{19} 
+ x^{20})\,\,,$$
of which  $1$ is not a zero,  a contradiction. 
\end{proof}

\begin{remarks} \label{NoLiftAlg} 
 On the model of Theorem~\ref{NoLiftTwoDegreeTwo}, one of the $\tau_i$ does not lift, which means that a small modification occurs on the special fiber.  In fact one can say the following. 
Given a  projective model $X_R\to {\rm Spec}  \ R$, with ample line bundle $L_R$, then $\tau_i$  always lifts to an automorphism $\tau_i^0$  of $X_R^i:=X_R\setminus \Sigma_i$, where $\Sigma_i$  is  the exceptional  locus of $X\to Y_i$.   
So one can always define the line bundle $L_{i, R}= ( \tau_i^0)^*L_R|_{X_R^i} \in {\rm Pic} (X_R^i)={\rm Pic}(X_R)$. Then $\tau_i$ lifts to $X_R$ if and only if 
$L_{i,R}$ is ample, which is equivalent to
$L_{i, R}\otimes_R k= \tau_i^* (L_R\otimes_R K)$.

\end{remarks}
The next theorem gives the (first)  explicit example of a positive entropy  automorphism of  a K3 surface which 
is not geometrically liftable  to characteristic zero. 
\begin{theorem} \label{NoLiftPosEntropy} The automorphism
$f := \tau_1 \circ \tau \circ \tau_2 \circ \tau \in {\rm Aut}\, (X)$
\begin{itemize}
 \item[(1)] is not geometrically liftable to any projective model $X_R\to \Spec R$ of characteristic zero  as well as its power $f^n$ 
($n \in \Z \setminus \{0\}$); 
 \item[(2)] has positive entropy $h(f)$ equal to the logarithm of a Salem number  $a$ of degree $22$;
 \item[(3)] $h(f)$ is not the entropy of any automorphism on any  projective K3 surface  in characteristic $0$;
 \item[(4)] numerically
$$a =  26.9943 \ldots\,\, ,\,\, h(f) = \log\, 26.9943 \ldots\,\, .$$
\end{itemize}
\end{theorem}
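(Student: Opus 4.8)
The plan is to prove the four assertions in sequence, building on the explicit matrix descriptions from Kondo--Shimada and the lattice-theoretic machinery of Sections~\ref{sec:Salem} and~\ref{sec:Nolifting}. The backbone of the argument is a single computer-assisted computation: using the matrices $A_1$, $A_2$, $T$ of $(\tau_i)_* $ and $\tau_*$ in the basis $\sB$, I would form the matrix $F := A_1 \cdot T \cdot A_2 \cdot T$ representing $f_* = (\tau_1 \circ \tau \circ \tau_2 \circ \tau)_*$ on ${\rm NS}\,(X)$ (being careful that these are the \emph{covariant} representations, so composition of automorphisms corresponds to the matrix product in the order dictated by the right action), and then factor its characteristic polynomial $\chi_F(x)$ over $\Z[x]$ with \texttt{Mathematica}.

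For assertion~(2), the hard analytic input is already packaged in Proposition~\ref{salem} and Remark~\ref{entropy}: since $f^* = f_*^{-1}$ preserves the positive cone $C^0$ (it comes from an automorphism, hence preserves the ample cone up to the fixed component structure, and in any case $\det f_* = \pm 1$ forces preservation of one component of $C$), the characteristic polynomial is a product of cyclotomic factors times at most one Salem factor, and $f$ has positive entropy if and only if a Salem factor appears. I would therefore check from the \texttt{Factor} output that $\chi_F(x)$ is itself an irreducible Salem polynomial of degree $22$; since $\dim_{\Q} {\rm NS}\,(X)\otimes \Q = 22$, this is the maximal possible degree, and the spectral radius ${\rm sp}\,(f) = a$ is then the unique real root $a>1$, giving $h(f) = \log a$. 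Assertion~(4) is just the numerical approximation of this root, obtained by asking for the real zero of $\chi_F$.

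For assertion~(1), I would argue exactly as in the proof of Theorem~\ref{NoLiftTwoDegreeTwo}, and this is where the delicate point lies. Suppose some power $f^n$ ($n\neq 0$) were geometrically liftable to a projective model $X_R \to \Spec R$ of characteristic zero. Then $f^n$ lifts to an automorphism of a \emph{complex} projective K3 surface $Z := X_{\bar K}$, and by McMullen's and McMullen--Mohammadi-type constraints recalled before Theorem~\ref{IntroThm4}, the entropy of any automorphism of a complex projective K3 surface is either $0$ or the logarithm of a Salem number of degree $\le 20$. But the characteristic polynomial of $(f^n)_*$ on ${\rm NS}\,(Z)\otimes\Q$, which has dimension $\le 20$ by the injectivity of $sp_{NS/{\rm torsion}}$ and the drop in Picard rank, must have a Salem factor that is a $\Q$-conjugate of $a^n$; yet $a$ has degree $22$, and raising to the $n$-th power does not lower the degree of a Salem number below $22$ (the minimal polynomial of $a^n$ still has a real root $>1$ of the same Galois-theoretic size). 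This contradicts the degree bound $\le 20$, proving non-liftability for every nonzero power simultaneously. The main obstacle here is verifying cleanly that $a^n$ remains a Salem number of degree exactly $22$ for all $n \neq 0$, which I would settle by noting that $a$ and $a^{-1}$ are the only roots off the unit circle of the Salem polynomial, so $a^n$, $a^{-n}$ are the only off-circle roots of the characteristic polynomial of $f^n$, forcing its Salem factor to again be of degree $22$.

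Finally, assertion~(3) is an immediate consequence of~(2) once the degree bound of~(1) is in hand: the Salem number $a$ has degree $22$, so by the results of McMullen cited in the introduction (\cite{Mc02}, \cite{Mc13}), $\log a$ cannot be realized as the entropy of any automorphism of a projective K3 surface over a field of characteristic $0$, since such entropies are logarithms of Salem numbers of degree $\le 20$ (or zero). I would state this as a one-line corollary of the degree-$22$ computation, emphasizing that it is precisely the gap between the degree $22$ available in positive characteristic and the degree $\le 20$ available in characteristic $0$ that drives the whole phenomenon.
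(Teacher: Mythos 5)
Your proposal follows the paper's proof almost step for step: the same matrix $A_1TA_2T$ (the order is immaterial for the characteristic polynomial, since $A_1TA_2T$ and $TA_2TA_1$ are conjugate), the same appeal to Proposition~\ref{salem} to upgrade ``irreducible of degree $22$ with a real root $\approx 26.99$'' to ``Salem polynomial'', and the same specialization argument $NS(X_{\bar K})\hookrightarrow NS(X)$ with $\rho(X_{\bar K})\le 20$ to rule out lifting of $f^n$ and to get~(3).

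There is, however, one genuine gap, and it sits exactly at the point you yourself flag as ``the main obstacle''. You claim that $a^n$ remains a Salem number of degree $22$ because $a^{\pm n}$ are the only off-circle roots of $\prod_i(x-\alpha_i^n)$. That observation does not force the minimal polynomial of $a^n$ to have degree $22$: a priori $\Q(a^n)$ could be a proper subfield of $\Q(a)$, in which case $\prod_i(x-\alpha_i^n)$ would factor as a Salem polynomial of smaller degree times cyclotomic factors accounting for the remaining unit-circle roots, and your degree count collapses. What is needed is the classical fact that powers of a Salem number are Salem numbers \emph{of the same degree}, equivalently $\Q(a^n)=\Q(a)$; the paper imports this as \cite[Lem.~2]{Smy14} rather than proving it. A closely related soft spot is your assertion that the characteristic polynomial of the lifted automorphism on $NS(X_{\bar K})\otimes\Q$ ``must have a Salem factor that is a $\Q$-conjugate of $a^n$'': entropy is not automatically preserved under specialization (the image could in principle be a proper invariant subspace missing the $a^n$-eigenline). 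The clean way to close this, and essentially what the paper does via minimal polynomials, is to note that once $a^n$ has degree $22$ the characteristic polynomial of $(f^n)_*$ on $NS(X)\otimes\Q$ is irreducible, so $NS(X)\otimes\Q$ is an irreducible $\Q[(f^n)_*]$-module and its only nonzero invariant subspace is the whole $22$-dimensional space --- incompatible with a nonzero equivariant image of the rank $\le 20$ lattice $NS(X_{\bar K})$. With the Smyth citation inserted and the invariant-subspace argument made explicit, your proof coincides with the paper's.
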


\begin{proof} Using the explicit forms  of $A_1$, $A_2$, $T$ in Tables 5.4, 5.5, 3.3 in \cite{KS12}, and {\tt Mathematica (all we need here are Dot command, CharacteristicPolynomial command, Factor command, and NSolve command)}, we find that the characteristic polynomial  $P$ of 
$f_* \vert_{ {\rm NS}\,(X))}$, 
i.e., of $A_1TA_2T$, is
$$1 - 27 x + 4 x^3 + 3 x^4 + 24 x^5 + 15 x^6 - 7 x^7 + x^8 - 14 x^9 - 
 2 x^{10} - 5 x^{11} - 2 x^{12}$$
$$ - 14 x^{13} + x^{14} - 7 x^{15} + 15 x^{16} + 
 24 x^{17} + 3 x^{18} + 4 x^{19} - 27 x^{21} + x^{22}\,\, .$$
This is an irreducible Salem polynomial of degree $22$. 
The fact that this is irreducible is checked by {\tt Factor command}. Then this is a Salem polynomial by Proposition~\ref{salem}. Indeed, it is either a Salem polynomial or a cyclotomic polynomial of degree $22$. But {\tt NSolve command} shows that one of  the zeroes of the above polynomial is approximately $26.9943$. Hence it is a Salem polynomial of degree $22$ with Salem number approximately $26.9943$. This shows (2). 

By \cite[Lem.~2]{Smy14}, if  $\lambda$ is a Salem number of degree $d$, then $\lambda^n$ is a Salem number of the same degree $d$ for all $n \in \N \setminus \{0\}$.  Applying this to the eigenvalue $\lambda$  of $f$ or  of $f^{-1}$ which is a Salem number, one concludes that $f^n$,  for all $n\in \Z\setminus \{0\}$,  has entropy the logarithm of a Salem number of degree $22$.

As in characteristic $0$, an automorphism  $g$ always stabilizes $NS(X)\subset H^2(X_{\bar K}, \Q_\ell(1))$ and the Picard rank is at most $20$, the logarithm of the absolute value of a root of $P$  can not be
the entropy of $g$.  This shows (3).

If $f^n, \ n\in \Z$ was lifting to an automorphism $g$ on  $X_R\to \Spec  R$, then the specialization $\iota: NS(X_{\bar K})\hookrightarrow NS(X)$  (see Section~\ref{ss:K3}) would be $g$ equivariant, thus, as the Picard rank of $X_{\bar K}$ is at most $20$, the minimal polynomial of $g$ could not have degree $22$. This shows (1) and finishes the proof. 
\end{proof}
\begin{remarks} \label{rmk:hodge}
1) As discussed in \cite[Conj.~1.2]{BlEsKe14}, one expects that the rational crystalline cycle class of an algebraic cycle, expressed as a de Rham class on a model in characteristic $0$, is the cycle class of an algebraic cycle on the model, if and only if it is in the right level of the Hodge filtration.  For the cycle class  $c$ of the graph of an automorphism $f$ on a K3 surface, the conjecture is verified (as written in  \cite[Cor.~2.5]{Ogu79}). Indeed,  
$$c\in F^2H^4_{DR}(X_R\times_R X_R/R)=F^2H^4(\hat X_R\times_R \hat X_R/R)$$  
if and only if $f^*$ acting on $H^2_{DR}(\hat X_R/R)=\varprojlim_n H^2_{DR}(X_n/(R/ \langle \pi^n \rangle))$ respects the Hodge filtration.  Here $\pi$ is the uniformizer of $R$ and $X_n=X\otimes_R R_n, \ R_n= R/\langle \pi^n\rangle$. 
Clearly, if $f$ lifts, then $c$ is the cycle class of the graph and lies in $F^2H^4(\hat X_R\times_R \hat X_R/R)$.
Let us now assume that $c\in F^2H^4(\hat X_R\times_R \hat X_R/R)$. 
 The obstruction to lifting $f_n$ on $X_n$ to $f_{n+1}$ on $X_{n+1}$  lies in $H^1(X_n, f_n^*T_{X_n/ R_n} \otimes \pi^n|_{X_n})$ and is identified with the action of $f_n^*$ in  ${\rm Hom} (H^1(f^*(\Omega^1_{X_n/R_n} )  ), \pi^n \otimes H^2(\sO_{X_n}))$, thus dies. One constructs  in  this way a prosystem of lifts $\varprojlim_n f_n$, thus, a formal scheme $\varprojlim_n \Gamma_n$, where $\Gamma_n\subset X_n\times_{R_n} X_n$ is the graph of $f_n$, thus, by  \cite[Chap.III,~Thm.~5.4.5]{EGA3},  a projective scheme $\Gamma_R\subset X_R\times_R X_R$ which lifts the graph of $f$ and thus defines the lift. 

So the test whether or not  an automorphism lifts to characteristic $0$ is of $p$-adic nature.
On the other hand,  the test we develop  in  Theorem~ \ref{NoLiftPosEntropy} relies on the degree of an  algebraic integer.   It is of course very specific to our situation, nonetheless it is intriguing.
 
 \medskip
 
 2) The Salem number we define in Theorem~ \ref{NoLiftPosEntropy} does not come from  the entropy  of an automorphism on a  projective K3 surface in characteristic $0$. One could perhaps speculate that there is a projective model $V_R\to \Spec R$ of a higher dimensional smooth projective variety $V_K$ in characteristic $0$, with an automorphism $f_R$ of $V_R/R$, such that its entropy is reached on the class of a $1$-cycle, the support of which,  by specialization,  lies on the K3 surface considered in Theorem~ \ref{NoLiftPosEntropy}, as a higher codimensional cycle on $V_R\otimes_R k$.  Though we do not have any computation going in this direction, this would just be nice. 

\end{remarks}

\section{Lifting of automorphisms of supersingular  K3 surfaces of Artin invariant $1$ in large characteristic}
 \label{sec:EllipticK3}
Throughout this section, $k$ is
 an algebraically closed  field of  characteristic 
$p  \ge 3$
and $X=X(p)$
as in Section~\ref{sec:NonLiftable}. So $X(p)$ is a supersingular K3 surface of Artin invariant $1$, and is uniquely defined up to isomorphism with this property. 

Using Ogus' crystalline Torelli theorem \cite{Ogu83}, J. Jang \cite[Thm.3.3]{Jan14} proved the following theorem:
\begin{thm} \label{thm:jang}
The image of the representation of ${\rm Aut}(X)$ in the linear automorphism of the one dimensional vector space $k\cdot \omega= H^0(X, \Omega^2_{X/k})$ is a cyclic group of cardinality $p+1$. 

\end{thm}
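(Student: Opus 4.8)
The plan is to determine the image of the character $\rho\colon {\rm Aut}(X)\to {\rm GL}(k\omega)=k^\times$ by translating the action on $\omega=H^0(X,\Omega^2_{X/k})$ into the language of Ogus' supersingular K3 crystals, where $p+1$ appears as the order of a norm--one torus. Since any finite subgroup of the multiplicative group of a field is cyclic and, in characteristic $p$, of order prime to $p$, the theorem reduces to showing that ${\rm im}\,\rho$ is finite of order exactly $p+1$; cyclicity is then automatic.

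First I would set up the discriminant data. For Artin invariant $1$ one has $\det {\rm NS}(X)=-p^2$, so the discriminant group $T_0:={\rm NS}(X)^\vee/{\rm NS}(X)$ is a two--dimensional $\F_p$--vector space carrying the non-degenerate quadratic form induced by the intersection form. The conditions built into Ogus' notion of a supersingular K3 crystal force $T_0$ to be the anisotropic (minus--type) binary form; identifying it with $\F_{p^2}$ equipped with the norm form $N(x)=x^{1+p}$, the special orthogonal group ${\rm SO}(T_0)$ is the norm--one torus $\{u\in\F_{p^2}^\times:u^{p+1}=1\}$, cyclic of order $p+1$, while the full orthogonal group ${\rm O}(T_0)$ contains it with index $2$. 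After base change to $k\supseteq \F_{p^2}$ the form on $T_0\otimes_{\F_p}k$ becomes hyperbolic with exactly two isotropic lines, interchanged by ${\rm O}(T_0)/{\rm SO}(T_0)$; the characteristic subspace $K\subset T_0\otimes_{\F_p}k$ attached by Ogus to the period of $X$ is one of these lines, defined over $\F_{p^2}$, and is $\sigma_0=1$--dimensional. The stabiliser of $K$ in ${\rm O}(T_0)$ is therefore exactly ${\rm SO}(T_0)$, and the tautological action of $u\in {\rm SO}(T_0)=\mu_{p+1}$ on $K$ is multiplication by the image of $u$ under the chosen embedding $\F_{p^2}\hookrightarrow k$, hence a faithful character with image $\mu_{p+1}(k)\subset k^\times$ of order $p+1$.

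Next I would identify $\rho$ with this character. The action of ${\rm Aut}(X)$ on ${\rm NS}(X)$ induces an action on $T_0$, and Ogus' period description gives an equivariant identification of the Hodge line $F^2H^2_{dR}(X/k)=H^0(X,\Omega^2_{X/k})=k\omega$ with the characteristic line $K$; thus $\rho$ is the restriction to ${\rm Aut}(X)$ of the action of ${\rm O}(T_0)$ on $K$. Since the period is an invariant of $X$, every automorphism preserves $K$, so the image of ${\rm Aut}(X)\to {\rm O}(T_0)$ lies in ${\rm Stab}(K)={\rm SO}(T_0)=\mu_{p+1}$; combined with the faithful character above, this already yields ${\rm im}\,\rho\subseteq \mu_{p+1}(k)$, in particular finiteness and the upper bound that $|{\rm im}\,\rho|$ divides $p+1$. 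For the matching lower bound I would invoke Ogus' crystalline Torelli theorem \cite{Ogu83}: it identifies ${\rm Aut}(X)$ with the group of isometries of ${\rm NS}(X)$ preserving both the period and the ample cone, the latter taken modulo the Weyl group $W$ generated by reflections in the $(-2)$--classes. The key observation is that each such reflection $s_\delta(x)=x+(x,\delta)\delta$ acts trivially on $T_0$, because $(x,\delta)\in\Z$ and $(x,\delta)\delta\in {\rm NS}(X)$ for $x\in {\rm NS}(X)^\vee$; hence $W$ lies in the kernel of ${\rm O}({\rm NS}(X))\to {\rm O}(T_0)$ and does not shrink the image in $T_0$. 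Using the surjectivity of ${\rm O}({\rm NS}(X))\to {\rm O}(T_0)$ for the indefinite rank--$22$ lattice ${\rm NS}(X)$ (Nikulin), together with the fact that every element of ${\rm SO}(T_0)$ preserves $K$ and can be realised by a period-- and ample-cone--preserving isometry, I would conclude that ${\rm Aut}(X)\to {\rm SO}(T_0)=\mu_{p+1}$ is surjective, whence $|{\rm im}\,\rho|=p+1$.

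The step I expect to be the main obstacle is making the equivariant identification of $H^0(X,\Omega^2_{X/k})$ with the characteristic line $K$ fully rigorous, i.e. checking that the scalar by which $f\in{\rm Aut}(X)$ acts on $\omega$ is literally the eigenvalue of $f_*$ on $K$ inside Ogus' period formalism, and the precise bookkeeping in the crystalline Torelli theorem that translates ``preserves the period'' into ``stabilises $K$'' so that the realisation of all of ${\rm SO}(T_0)$ by automorphisms is legitimate. The remaining ingredients, namely the norm--torus computation and the triviality of $W$ on the discriminant, are routine.
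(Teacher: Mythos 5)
The paper itself gives no proof of this statement: it is imported verbatim from Jang \cite[Thm.~3.3]{Jan14}, with only the remark that it rests on Ogus' crystalline Torelli theorem. Your outline is, in structure, a reconstruction of that cited argument (discriminant form of ${\rm NS}(X)$ for Artin invariant $1$, identification of ${\rm SO}$ of the anisotropic binary $\F_p$-form with the norm-one torus $\mu_{p+1}$, Ogus' characteristic line as one of the two isotropic lines after base change, crystalline Torelli for realizability), so there is no genuinely different route to compare; the two points you flag yourself (the equivariant identification of $H^0(X,\Omega^2_{X/k})$ with the characteristic line, and the translation of ``preserves the period'' into ``stabilises $K$'') are indeed the places where one must quote Ogus precisely.

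There is, however, one concrete gap in your surjectivity step beyond what you flag. Crystalline Torelli identifies ${\rm Aut}(X)$ with the isometries of ${\rm NS}(X)$ preserving both the period and the ample cone, and the period-preserving isometries decompose as $(\{\pm 1\}\times W)\rtimes {\rm Aut}(X)$ with $W$ the Weyl group. You correctly note that $W$ dies in ${\rm O}(T_0)$, but $-{\rm id}_{{\rm NS}(X)}$ does not: it maps to $-1\in {\rm SO}(T_0)=\mu_{p+1}$, yet it reverses the positive cone and so never contributes an automorphism. Hence Nikulin surjectivity plus your bookkeeping only yields $\mu_{p+1}=\{\pm 1\}\cdot \mathrm{im}\bigl({\rm Aut}(X)\to {\rm SO}(T_0)\bigr)$, i.e.\ the image has index at most $2$. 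When $p\equiv 3\pmod 4$ the element $-1$ is a square in the cyclic group $\mu_{p+1}$ of order $p+1$, so a proper index-$2$ image (necessarily the subgroup of squares) would already contain $-1$ and the dichotomy collapses, forcing surjectivity; but when $p\equiv 1\pmod 4$ the squares together with $\{\pm 1\}$ do exhaust $\mu_{p+1}$, and your argument does not exclude that the image is only the index-$2$ subgroup of order $(p+1)/2$. Closing this requires an extra input (for instance exhibiting an automorphism mapping to a non-square in $\mu_{p+1}$, or a finer use of the Torelli theorem). Note that the exact value $p+1$, not merely a divisor of it, is what the paper later uses: it needs an $h$ with $h^*\omega=\xi_{p+1}\omega$ for a primitive $(p+1)$-th root of unity.
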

We denote by  $h$ an element in ${\rm Aut}(X)$ such that $h^*\omega=\xi_{p+1} \omega$ where $\xi_{p+1}$ is a primitive $(p+1)$-th root of unity in $k$. 

\begin{remark} \label{rmk:jang}
Let $M$ be a projective K3 surface over a characteristic $0$ field $K$. Then the image of ${\rm Aut}(M)$ 
 in the linear automorphism of the one dimensional vector space $K\cdot \omega= H^0(M, \Omega^2_{M/K})$ 
is a cyclic group of order $\le 66$ (\cite{Ni79}). In fact Nikulin considered the image of a finite subgroup, but, by the finiteness of the pluri-canonical representation in characteristic $0$ (Ueno-Deligne,  \cite[Thm.~14.10]{Ue75}),  the proof extends to the whole automorphism group.
\end{remark}

Jang deduces from Theorem~\ref{thm:jang}  and Remark~\ref{rmk:jang}  the following:

\begin{corollary} \label{cor:jang}
If $p\ge 67$, then $h$ is not geometrically liftable to characteristic $0$.

\end{corollary}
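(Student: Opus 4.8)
The plan is to argue by contradiction, comparing the scalar by which a hypothetical lift of $h$ acts on the relative canonical form over the special fibre against the scalar it induces over the geometric generic fibre. The two are governed by a single unit over $R$, yet Theorem~\ref{thm:jang} forces the order of the action to be $p+1$ downstairs, while Remark~\ref{rmk:jang} caps it by $66$ upstairs; since $p+1\ge 68$ for $p\ge 67$, this is incompatible.

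First I would assume $h$ is geometrically liftable: there is a model $X_R\to\Spec R$ with $K={\rm Frac}(R)$ of characteristic $0$ and an automorphism $g\in{\rm Aut}(X_R/R)$ specializing to $h$, with geometric generic fibre $g_{\bar K}\in{\rm Aut}(X_{\bar K})$. Since the relative dualizing sheaf of $X_R/R$ is trivial and $h^0$ of the canonical sheaf is constant along the fibres, $H^0(X_R,\omega_{X_R/R})$ is free of rank one over $R$ and satisfies base change; I fix a generator $\Omega$. Then $g^*\Omega=c\,\Omega$ for some unit $c\in R^*$, as $g^*$ is invertible.

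The key point is that this single unit $c$ records both actions. Restricting $\Omega$ to the special fibre yields a generator of $H^0(X,\Omega^2_{X/k})$ on which $h^*$ acts by the reduction $\bar c$, so $\bar c=\xi_{p+1}$ has exact order $p+1$. Restricting $\Omega$ to the geometric generic fibre yields a generator of $H^0(X_{\bar K},\Omega^2_{X_{\bar K}/\bar K})$ on which $g_{\bar K}^*$ acts by the image of $c$ in $\bar K$. By Remark~\ref{rmk:jang} the image of ${\rm Aut}(X_{\bar K})$ in ${\rm GL}(H^0(X_{\bar K},\Omega^2_{X_{\bar K}/\bar K}))$ is cyclic of order $\le 66$ (here $X_{\bar K}$ is projective, being a smooth proper surface), so $c$ is a root of unity of order $N\le 66$. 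Reducing modulo the maximal ideal, $c^N=1$ forces $\bar c^N=1$, so the order $p+1$ of $\bar c$ divides $N\le 66$; but $p+1\ge 68$, a contradiction.

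The one step that needs genuine care—and which I regard as the crux—is the passage between the fibres: one must verify that $H^0(X_R,\omega_{X_R/R})$ is free of rank one and compatible with base change, so that the \textit{same} unit $c$ simultaneously records the scalar of $h^*$ on the special fibre and of $g_{\bar K}^*$ on the geometric generic fibre, together with the elementary fact that reduction cannot increase the multiplicative order. Granting that, the whole argument collapses to the numerical clash between $p+1$ and $66$.
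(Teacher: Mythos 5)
Your argument is correct and is exactly the deduction the paper intends (the paper gives no written proof, simply citing that the corollary follows from Theorem~\ref{thm:jang} and Remark~\ref{rmk:jang}): a lift would force a single unit $c\in R^*$ to reduce to a primitive $(p+1)$-th root of unity on the special fibre while satisfying $c^N=1$ with $N\le 66$ by Nikulin--Ueno--Deligne on the generic fibre, which is impossible for $p+1\ge 68$. Your added care about freeness and base change of $H^0(X_R,\omega_{X_R/R})$, and about only using that reduction cannot increase multiplicative order, is exactly the right way to make the comparison rigorous.
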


The aim of this section is to construct an element $\tau \in {\rm Aut}(X)$ {\it of positive entropy} which is not geometrically liftable to characteristic $0$.  (Recall $X=X(p)$).

\begin{definition} \label{def:number}
We define $\beta$ to be the least common multiple of the natural numbers $n$ such that the value of the Euler function $\varphi(n)$ is smaller or equal to  $22$. 

\end{definition}
\begin{thm} \label{thm:beta}
If $p+1\ge  67 \beta$, then there is an automorphism  $\tau $ in  $ {\rm Aut}(X(p))$, of positive entropy, which is not geometrically  liftable to characteristic $0$.

\end{thm}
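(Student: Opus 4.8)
The plan is to combine Jang's theorem (Theorem~\ref{thm:jang}), which supplies an automorphism $h$ acting on $\omega=H^0(X,\Omega^2_{X/k})$ by a primitive $(p+1)$-th root of unity $\xi_{p+1}$, with the null-entropy/hyperbolic dichotomy of Propositions~\ref{salem} and~\ref{nullentropy}, and with the abundance of elliptic fibrations on $X(p)$ furnished by Shioda's Mordell-Weil computations \cite{Sh90}. Throughout I write $\rho:\mathrm{Aut}(X)\to k^{\times}$ for the multiplicative character recording the action on $\omega$, and $r:\mathrm{Aut}(X)\to\mathrm{O}(\mathrm{NS}(X))$ for the contravariant representation on the rank-$22$ lattice $\mathrm{NS}(X)$; the image of $\rho$ is the group $\mu_{p+1}$ of $(p+1)$-th roots of unity, and an automorphism is of positive entropy exactly when the spectral radius of its image under $r$ exceeds $1$. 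The non-liftability criterion is the one behind Corollary~\ref{cor:jang}: by Remark~\ref{rmk:jang} the image of $\rho$ for any projective K3 surface in characteristic $0$ is cyclic of order $\le 66$, so any $\tau\in\mathrm{Aut}(X)$ with $\rho(\tau)$ of order $\ge 67$ is not geometrically liftable.

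First I would dispose of the easy case: if $h$ is itself of positive entropy, then $\tau=h$ already proves the theorem, since $\rho(h)$ has order $p+1\ge 67\beta>66$. So assume $h$ is of null-entropy; by Proposition~\ref{salem} the isometry $r(h)$ is then quasi-unipotent. The heart of the matter is the elliptic-fibration input. By Shioda \cite{Sh90}, $X(p)$ carries elliptic fibrations of positive Mordell-Weil rank, and in fact several distinct elliptic structures. Translation by a nontorsion section of such a fibration is an automorphism that fixes the isotropic fiber class and acts trivially on $\omega$; it is therefore symplectic and, by Proposition~\ref{nullentropy}, of null-entropy. Choosing two fibrations with distinct fiber classes $e_1\ne e_2$ and translations $t_1,t_2$ fixing $e_1,e_2$ respectively, the composite $\sigma:=t_1t_2$ is again symplectic, and---this is the ``positive entropy out of null-entropy'' mechanism---for a suitable choice of sections it is of positive entropy, because two isometries fixing distinct isotropic rays generate hyperbolic elements of $\mathrm{O}(\mathrm{NS}(X))$.

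The remaining step is to upgrade the symplectic $\sigma$ to an automorphism with large $\omega$-action without destroying positive entropy, and this is where $\beta$ enters. Suppose first that $r(h)$ is of finite order $a$. Since $r(h)$ acts on a lattice of rank $22$, its characteristic polynomial is a product of cyclotomic factors $\Phi_{n_i}$ with $\varphi(n_i)\le 22$, whence each $n_i$, and therefore $a=\mathrm{lcm}(n_i)$, divides $\beta$ (Definition~\ref{def:number}). Consequently $h^{a}$ is numerically trivial, $r(h^{a})=\mathrm{id}$, while $\rho(h^{a})=\xi_{p+1}^{a}$ has order $(p+1)/\gcd(a,p+1)\ge (p+1)/\beta\ge 67$. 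I then set $\tau:=\sigma\,h^{a}$. Since $r(h^{a})=\mathrm{id}$ we have $r(\tau)=r(\sigma)$, so $\tau$ has the same positive spectral radius as $\sigma$; and since $\sigma$ is symplectic, $\rho(\tau)=\rho(h^{a})$ has order $\ge 67>66$, so $\tau$ is not geometrically liftable, as desired. If instead $r(h)$ is quasi-unipotent but of infinite order, then it is parabolic and fixes an isotropic class $e'$, so $h$ preserves an elliptic fibration; combining $h$ with a translation $t$ fixing a different isotropic class (again supplied by \cite{Sh90}) yields $\tau:=h\,t$ with $\rho(\tau)=\xi_{p+1}$ of order $p+1>66$ and, for a suitable section, $r(\tau)$ hyperbolic, hence $\tau$ of positive entropy.

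The step I expect to be the main obstacle is establishing genuine positivity of the entropy, i.e. that the relevant products of parabolic isometries are actually hyperbolic (spectral radius strictly $>1$) rather than parabolic or elliptic; this is precisely what forces one to invoke Shioda's explicit control \cite{Sh90} of the Mordell-Weil lattices and the presence of several elliptic structures on $X(p)$, and to choose the translating sections with care. A secondary technical point is justifying the quasi-unipotent dichotomy for $r(h)$ and, in the parabolic sub-case, verifying that the fixed isotropic class of $h$ can be taken distinct from that of the auxiliary translation.
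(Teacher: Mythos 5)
Your overall strategy coincides with the paper's: take Jang's automorphism $h$ with $h^*\omega=\xi_{p+1}\omega$, build a symplectic automorphism of positive entropy from translations along two elliptic fibrations of positive Mordell--Weil rank (this is exactly Lemma~\ref{positiveentropy}, which relies on taking \emph{high powers} $f_1^{n_1}\circ f_2^{n_2}$ via \cite[Thm.~3.1]{Og09}), and conclude non-liftability from the order of the action on $\omega$ being $\ge 67$. Your finite-order case ($r(h)$ of finite order $a\mid\beta$, so $h^a$ numerically trivial and $\tau=\sigma h^a$ works) is correct and is precisely the paper's case $s=0$.

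The genuine gap is in your treatment of the case where $r(h)$ is quasi-unipotent of infinite order. There you set $\tau:=h\,t$ and assert that ``for a suitable section'' $r(\tau)$ is hyperbolic. The underlying principle you invoke --- that two parabolic isometries fixing distinct isotropic rays ``generate hyperbolic elements'' --- does not give this: a single product of two such parabolics can perfectly well be elliptic or parabolic (already in ${\rm O}(1,1)$-rank situations, e.g.\ the product of the two standard unipotents in ${\rm SL}_2(\Z)$ has finite order), which is exactly why \cite[Thm.~3.1]{Og09} requires large powers of \emph{both} factors. But raising $h$ to a large power destroys the control on $\rho(\tau)=\xi_{p+1}^{n_1}$, and raising only $t$ to a power is not covered by the cited result and is left unproved; you flag this yourself as ``the main obstacle,'' and it is not a removable technicality in your set-up. (A minor additional slip: $r(h)$ fixing an isotropic class does not by itself mean $h$ preserves an elliptic fibration, though you do not really use this.) The paper avoids the whole issue by a different mechanism: it passes to $g:=h^\beta$, which is \emph{unipotent} (possibly with nontrivial Jordan blocks) on ${\rm NS}(X)$, and studies $\tau=f\circ g^N$. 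Writing $g^*$ in Jordan form, ${\rm tr}\bigl((f\circ g^N)^*\vert_{{\rm NS}(X)}\bigr)$ is a polynomial in $N$ whose constant term is ${\rm tr}(f^*)$, of absolute value $\ge 23$ by the choice of $f$; hence this trace has absolute value $\ge 23$ for all large $N$, which forces positive entropy by Lemma~\ref{trace}, and choosing $N$ prime to the order $m\ge 67$ of $\xi_{p+1}^\beta$ keeps $\rho(\tau)$ of order $m$. This trace-polynomial argument is the ingredient your proposal is missing; with it, the case split you make becomes unnecessary.
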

In order to prove the theorem, we first state the following lemmata.

For an endomorphism $\theta$ of  a free  $\Z$-module  of finite type, we denote by ${\rm tr} (\theta)$ its trace with values in $\Z$.

\begin{lemma}\label{trace} Let $X$ be a K3 surface over any field.
Let $f \in {\rm Aut}\, (X)$ such that  $$\vert {\rm tr}\, (f^*  |_{{\rm NS}\, (X)}) \vert \ge 23.$$ Then $f$ is of positive entropy. Conversely, if $f$ is of positive entropy, then there is a positive integer $N$ such that 
$\vert {\rm tr}\, ((f^{n})^{*} \vert _{{\rm NS}\, (X)}) \vert \ge 23$ for all 
integers $n$ such that $n \ge N$.
\end{lemma}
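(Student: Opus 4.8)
The plan is to deduce everything from Proposition~\ref{salem} (via Remark~\ref{entropy}) together with the bound $\rho(X)\le 22$ on the Picard number. Write $g:=f^*|_{{\rm NS}(X)}$, an isometry of the even hyperbolic lattice ${\rm NS}(X)$ of signature $(1,\rho(X)-1)$. The first thing I would record is the geometric input needed to apply Proposition~\ref{salem}: since $f$ is an automorphism, $f^*$ carries ample classes to ample classes, hence preserves the connected component $C^0$ of the positive cone that contains the ample cone. This is exactly the hypothesis $g(C^0)\subset C^0$, so the trichotomy ``cyclotomic factors vs. one Salem factor'' of Proposition~\ref{salem} and Remark~\ref{entropy} is available.

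For the direct implication I would argue by contraposition. Suppose $f$ is of null-entropy. By Remark~\ref{entropy} the characteristic polynomial of $g$ is then a product of cyclotomic polynomials, so every one of the $\rho(X)$ eigenvalues of $g$ is a root of unity, in particular of absolute value $1$. Bounding the trace by the sum of the absolute values of the eigenvalues gives $|{\rm tr}(g)|\le \rho(X)\le 22<23$, contradicting $|{\rm tr}(g)|\ge 23$. Hence the hypothesis $|{\rm tr}(g)|\ge 23$ forces $f$ to be of positive entropy.

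For the converse, assume $f$ is of positive entropy. By Proposition~\ref{salem} and Remark~\ref{entropy} the characteristic polynomial of $g$ is a product of cyclotomic polynomials and exactly one Salem polynomial. The Salem factor contributes a real eigenvalue $a={\rm sp}(f)>1$, its inverse $a^{-1}$, and further eigenvalues on the unit circle, while the cyclotomic factors contribute only roots of unity. Thus $a$ and $a^{-1}$ are the only two eigenvalues off the unit circle, and the remaining $\rho(X)-2\le 20$ eigenvalues $\zeta_1,\dots,\zeta_{\rho(X)-2}$ satisfy $|\zeta_j|=1$. Since ${\rm tr}(g^n)=a^n+a^{-n}+\sum_j \zeta_j^{\,n}$, I would estimate
$${\rm tr}(g^n)\ \ge\ a^n+a^{-n}-(\rho(X)-2)\ \ge\ a^n-20\,.$$
Because $a>1$, the right-hand side tends to $+\infty$, so there is an integer $N$ with $a^N\ge 43$, and then $|{\rm tr}((f^n)^*|_{{\rm NS}(X)})|={\rm tr}(g^n)\ge 23$ for all $n\ge N$, as required.

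The argument is elementary once Proposition~\ref{salem} is in hand, so I do not expect a serious obstacle; the only two points that must be stated carefully are the geometric fact that $g$ preserves $C^0$ (which is what legitimizes the Salem/cyclotomic dichotomy) and the numerical fact $\rho(X)\le 22$, which is precisely what makes the threshold $23$ — one more than the largest possible rank — the correct cutoff in both directions.
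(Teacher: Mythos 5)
Your proposal is correct and follows essentially the same route as the paper: null entropy forces all eigenvalues of $f^*|_{{\rm NS}(X)}$ to be roots of unity so that $|{\rm tr}|\le\rho(X)\le 22$, and positive entropy gives the eigenvalue configuration $a>1$, $a^{-1}$, plus at most $20$ eigenvalues on the unit circle, whence ${\rm tr}((f^n)^*)\ge a^n+a^{-n}-20\to+\infty$. Your explicit remark that $f^*$ preserves the component $C^0$ containing the ample cone (so that Proposition~\ref{salem} and Remark~\ref{entropy} apply) is a point the paper leaves implicit, but otherwise the arguments coincide.
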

\begin{proof} Recall that ${\rm NS}\, (X)$ is of rank $\rho \le 22$. Let $\alpha_i$ ($1 \le i \le \rho$) be the eigenvalues of $f^* \vert_{ {\rm NS}\, (X)}$. If $f$ is not of positive entropy, then $\alpha_i$ are cyclotomic integers by 
Remark~\ref{entropy}. Hence
$$\vert {\rm tr}\, (f^* \vert _{{\rm NS}\, (X)}) \vert = \vert \sum_{i=1}^{\rho} \alpha_i \vert \le 
\sum_{i=1}^{\rho} \vert \alpha_i \vert = \rho < 23\,\, .$$
Hence $f$ is of positive entropy if $\vert {\rm tr}\, (f^* \vert_{{\rm NS}\, (X)}) \vert \ge 23$. Assume that $f$ is of positive entropy. Then, after renumbering,  $\alpha_1$ is a Salem number $a >1$, $\alpha_2$ is $1/a$ and all other $\alpha_k$ are of absolute value $1$ by Remark~\ref{entropy}. Then 
$${\rm tr}\, ((f^{N})^{*} \vert _{{\rm NS}\, (X)}) = a^{N} + \frac{1}{a^N} + \sum_{k=3}^{\rho} \alpha_k^{N}\,\, ,$$
which is an integer, in particular, real. Hence
$${\rm tr}\, ((f^{N})^{*} \vert _{{\rm NS}\, (X)}) = a^{N} + \frac{1}{a^N} + \sum_{k=3}^{\rho} {\rm Re}\,(\alpha_k^{N})\,\, .$$
Since $\vert \alpha_k^N \vert = \vert \alpha_k \vert^N = 1$, it follows that 
$$-18 = \sum_{k=3}^{\rho} -1 \le \sum_{k=3}^{\rho} {\rm Re}\,(\alpha_k^{N}) \le \sum_{k=3}^{\rho} 1 = 18\,\, .$$
On the other hand, since $a > 1$, it follows that 
$$\lim_{N \to \infty} a^{N} + \frac{1}{a^N} = +\infty\,\, .$$
Hence, there is $N$ such that ${\rm tr}\, ((f^{n})^{*} \vert_{ {\rm NS}\, (X)}) \ge 23$, hence $\vert {\rm tr}\, ((f^{n})^{*} \vert_{ {\rm NS}\, (X)}) \vert \ge 23$, for all integers $n$ such that $n \ge N$. 
\end{proof}

\begin{lemma}\label{positiveentropy} 
Let $X=X(p)$. Then
there is $f \in {\rm Aut}\, (X)$ such that $\vert {\rm tr}\, (f^* \vert_{ {\rm NS}\,(X)}) \vert > 23$,  and $f^*\omega=\omega$. In particular, $f$ is of positive entropy.
\end{lemma}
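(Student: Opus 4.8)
The plan is to produce a \emph{symplectic} automorphism of positive entropy on $X=X(p)$ by composing automorphisms of null-entropy, and then to pass to a high power in order to force the trace to exceed $23$. The symplectic building blocks will be translations by sections of elliptic fibrations: a translation preserves the fibration, hence fixes the class $F$ of a fibre, which is a non-zero isotropic vector, so by Proposition~\ref{nullentropy} it is of null-entropy; and being a fibrewise translation it preserves the relative invariant differential, hence acts trivially on $H^0(X,\Omega^2_{X/k})=k\cdot\omega$, i.e. it is symplectic. Thus any composition of such translations is symplectic, and the only remaining issue is to arrange positive entropy and a large trace.

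First I would use Shioda's analysis \cite{Sh90} of the Mordell--Weil groups of elliptic fibrations on the supersingular K3 surface $X(p)$ of Artin invariant $1$ to exhibit two \emph{distinct} elliptic fibrations $\pi_i:X\to\P^1$ ($i=1,2$), each carrying a section $P_i$ of infinite order; since $\rho(X)=22$, the Shioda--Tate formula makes the requisite positivity of the Mordell--Weil rank available. Writing $t_i\in{\rm Aut}\,(X)$ for translation by $P_i$ and $F_i$ for the corresponding fibre class, each $t_i$ is symplectic and of null-entropy as explained above, while $F_1$ and $F_2$ are non-proportional isotropic classes with $(F_1\cdot F_2)>0$ because the two fibrations are distinct. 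I would then set $g:=t_1\circ t_2$, which is again symplectic, and argue that $g$ is of \emph{positive} entropy: each $t_i^*$ is a quasi-unipotent (parabolic) isometry of ${\rm NS}\,(X)$ fixing the isotropic ray $\R_{\ge0}F_i$, and I want to show that, for a suitable choice of the $P_i$ (and, if necessary, of signs or powers), the product $g^*$ preserves no isotropic ray, so that by Proposition~\ref{salem} and Remark~\ref{entropy} its characteristic polynomial carries a Salem factor.

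This last step is the one I expect to be the main obstacle. Distinct parabolics need not compose to a hyperbolic isometry — as the elementary $\mathrm{SL}_2$ model shows, the composite of two shears fixing different boundary points can be hyperbolic, parabolic, or elliptic depending on the relative sign of the shears. Hence I must control the \emph{explicit} action of $t_1^*$ and $t_2^*$ on ${\rm NS}\,(X)$ — reading off, from Shioda's data, the fibre-component permutations, the narrow Mordell--Weil contributions, and the pairing $(F_1\cdot F_2)$ — and verify that the relevant combination is ``aligned'', so that $g^*$ acquires a real eigenvalue $>1$ (equivalently, fails to be quasi-unipotent). Choosing the sections appropriately, or replacing $g$ by $t_1^{a}\circ t_2^{b}$, should secure this.

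Finally, once $g$ is symplectic of positive entropy, I would invoke the converse direction of Lemma~\ref{trace}. Writing $a>1$ for the spectral radius of $g^*$ and $\alpha_3,\dots,\alpha_\rho$ for its remaining eigenvalues, all of absolute value $1$, one has
$${\rm tr}\,((g^{n})^{*}\vert_{{\rm NS}\,(X)})=a^{n}+a^{-n}+\sum_{k=3}^{\rho}{\rm Re}\,(\alpha_k^{n}),$$
which tends to $+\infty$ as $n\to\infty$ since $a>1$ while the remaining summands are bounded in absolute value by $\rho-2\le 20$. Hence for $N$ large enough $f:=g^{N}$ satisfies $\vert{\rm tr}\,(f^*\vert_{{\rm NS}\,(X)})\vert>23$, and $f$ is symplectic because $g$ is, so $f^*\omega=\omega$. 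By Lemma~\ref{trace} this also re-confirms that $f$ is of positive entropy, completing the proof.
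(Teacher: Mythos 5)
Your overall strategy coincides with the paper's: take two distinct elliptic fibrations on $X(p)$ with infinite-order sections (the paper gets them concretely from $X(p)={\rm Km}(E\times_k E)$, where the two projections give fibrations with four $I_0^*$ fibres and Mordell--Weil rank $4$ by Shioda--Tate \cite{Sh90}), compose the two translations, and then pass to a high power and use the converse direction of Lemma~\ref{trace} to push the trace above $23$; the symplectic and null-entropy properties of the individual translations are handled exactly as you handle them. The final powering-up step in your write-up is correct and is literally what the paper does.

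The one genuine gap is the step you yourself flag as ``the main obstacle'': you never actually prove that some combination $t_1^{a}\circ t_2^{b}$ has positive entropy. Your worry is well-founded --- two parabolic isometries fixing distinct isotropic rays can indeed compose to an elliptic or parabolic element --- but your proposed remedy (``reading off, from Shioda's data, the fibre-component permutations \dots and verify that the relevant combination is aligned'') is left entirely unexecuted, and as written the proof does not close. The paper closes exactly this step by citing \cite[Thm.~3.1]{Og09}, which asserts in general that if $f_i\in{\rm MW}(\varphi_i)$ are of infinite order for two distinct elliptic (Lagrangian) fibrations, then $f_1^{n_1}\circ f_2^{n_2}$ has positive entropy for all sufficiently large $n_1,n_2$; no explicit control of fibre-component permutations or of $(F_1\cdot F_2)$ is needed. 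So either invoke that result, or supply the quantitative argument behind it (the key point being that $(t_i^{n})^*x$ grows like $n^{2}(\text{const})\,e_i$ for $x$ not orthogonal to $e_i$, which for large $n_1,n_2$ forces the composite to fail quasi-unipotence); without one of these, the central claim of the lemma is unproved.
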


\begin{proof}  
Recall that $X(p)={\rm Km}(E\times_k E)$ where $E$ is a supersingular elliptic curve. 
The projections $pr_i: E\times_k E\to E, \ i=1,2$  descend to elliptic fibrations $\varphi_i: X\to \P^1\cong E/\langle \pm 1 \rangle$ (thus with section) and with exactly $4$ singular fibers of type $I_0^*$. 
We choose  a zero-section of $\varphi$ and denote it by $O$. We identify it with its image $O \subset X$. Let ${\rm MW}\, (\varphi_i)$ be the Mordell-Weil group of $\varphi_i$. It acts  on the generic fiber $X_\eta$ of $\varphi$ by translation, thus is an abelian  subgroup of ${\rm Aut}(X)$. 

One has (\cite{Sh90})

\ga{}{{\rm rank} \ {\rm MW}(\varphi_i)={\rm rank} \ {\rm NS}(X) -2 -4\times 4= 4. \notag
}

For each $i = 1$, $2$, choose $f_i \in {\rm MW}\, (\varphi_i)$ such that $f_i$ is of infinite order. The N\'eron-Severi class $e_i$ of a closed fiber of $\varphi_i$ is stable under $f_i$, i.e.,  $f_i^*(e_i) = e_i$  for $i =1$, $2$, $e_1$,  $e_2$ are of self-intersection $0$,  and they are linearly independent in the hyperbolic lattice ${\rm NS}\, (X)$. Then, by \cite[Thm.~ 3.1]{Og09}, $f_3 := f_1^{n_1} \circ f_2^{n_2} \in 
{\rm Aut}\, (X)$ is of positive entropy for large positive integers $n_1$, $n_2$. 
Note that $f_i^*\omega=\omega$ as $f_i$ is a translation automorphism of an elliptic fibration. Thus $f_3^*\omega=\omega$ as well. 
Hence $f = f_3^N$ for large $N$ 
is a required solution. 
\end{proof}

\begin{proof}[Proof of Theorem~\ref{thm:beta}]
If $h$ is of positive entropy, we are done. So we assume $h$ has null-entropy. This means, the characteristic polynomial of $h^*|_{{\rm NS}(X)}$ is the product of cyclotomic polynomials of degree $\le 22$. Set $g:= h^\beta$. As ${\rm rank} \ {\rm NS}(X)= 22$,  $g^*|_{{\rm NS}(X)}$ is unipotent, so  there is a basis of ${\rm NS}\, (X) \otimes \Q$ in which $g^* |_{{\rm NS}(X)\otimes \Q}  $ is represented by the Jordan canonical form:
$$g^* |_{{\rm NS}(X)\otimes \Q}  = J := J(r_1, 1) \oplus \cdots \oplus J(r_s, 1)\,\, .$$
Set 
$s = {\rm max}\, \{r_i\}_{i=1}^{s} -1$ and choose $f$ as in Lemma~\ref{positiveentropy}.

 If $s=0$, then $g^*|_{{\rm NS}(X)}={\rm Id}$. Hence $(f\circ g)^*|_{{\rm NS}(X)} =f^*|_{{\rm NS}(X)}$ and $(f\circ g)^*\omega=\xi_m \omega$ with some $m\ge 67$. Thus $f\circ g \in {\rm Aut}(X)$ is of positive entropy and not geometrically liftable to characteristic $0$.

Assume $s>0$. 

In the same basis, $f^*|_{{\rm NS}(X)\otimes \Q}$ is represented by a matrix $A = (A_{ij})$ where 
$A_{ij}$ is $r_i \times r_j$ matrix located at the $(i, j)$-block. Then 
$${\rm tr}\, ((f \circ g^N)^* \vert _{{\rm NS}\,(X)}) = {\rm tr}\, (J^NA) = \sum_{i=1}^{s}{\rm tr}\, (J(r_i, 1)^NA_{ii}).$$ 
We want to estimate each summand of the formula above. 
For instance, if $r_1 = 4$, then 
$$J_4 := J(4, 1) = I_4 + R_4\,\, ,\,\, R_4 := \left(\begin{array}{rrrr}
0 & 1 & 0 & 0\\
0 & 0  & 1 & 0\\
0 & 0 & 0 & 1\\
0 & 0 & 0 & 0
\end{array} \right)\,\, ,$$
and $R^4 = 0$. Thus, by the binomial expansion
$$J_4^N = I_4 + N 
\left(\begin{array}{rrrr}0 & 1 & 0 & 0\\
0 & 0  & 1 & 0\\
0 & 0 & 0 & 1\\
0 & 0 & 0 & 0\end{array} \right) + \frac{N(N-1)}{2}
\left(\begin{array}{rrrr}
0 & 0 & 1 & 0\\
0 & 0  & 0 & 1\\
0 & 0 & 0 & 0\\
0 & 0 & 0 & 0\end{array} \right) + \frac{N(N-1)(N-2)}{6}
\left(\begin{array}{rrrr}
0 & 0 & 0 & 1\\
0 & 0  & 0 & 0\\
0 & 0 & 0 & 0\\
0 & 0 & 0 & 0\end{array} \right)\,\, .$$
Hence, for a $4 \times 4$-matrix $A_4 := (a_{ij})$ of rational entries, 
we have
$${\rm tr}\, (J_4^NA_4) = {\rm tr}\, (A_4) + N(a_{21} + a_{32} + a_{43}) + \frac{N(N-1)}{2}(a_{31} + a_{42}) + \frac{N(N-1)(N-2)}{6}a_{41}\,\, ,$$ 
which is a polynomial of $N$ of degree $\le 3 = 4-1$, with 
rational coefficients depending on $A_4$ (and independent of $N$).  
For exactly the same reason,  from the expansion of $J(r, 1)^N$ as above, 
we find that 
 ${\rm tr}\, ((f \circ g^N)^* \vert_{ {\rm NS}\,(X)})$ 
is of the form
$${\rm tr}\, ((f \circ g^N)^* \vert_{ {\rm NS}\,(X)}) = a_sN^s + a_{s-1}N^{s-1} + \cdots 
+ a_1 N + {\rm tr}\, A\,\, .$$
Here  $a_k$ are rational numbers depending only on $A$ (and independent of $N$). So, if it  happened that $\vert {\rm tr}\, ((f \circ g^{k_n})^* \vert_{ {\rm NS}\,(X)}) \vert \le 22$ for some sequence of positive integers 
$$k_1 < k_2 < k_3 < \cdots < k_n < \cdots\,\, \to +\infty\,\, ,$$
 one would have
$$a_s = a_{s-1} = \cdots = a_{1} = 0\,\, ,$$
and hence
$$\vert {\rm tr}\, ((f \circ g^N)^* \vert _{{\rm NS}\,(X)}) \vert = \vert {\rm tr}\, (A) \vert$$
for all positive integer $N$.  But
$$\vert {\rm tr}\, (A) \vert = \vert {\rm tr}\, (f^* \vert_{ {\rm NS}\,(X)})) \vert \ge 23\,\, ,$$
by our choice of $f$, a contradiction to $\vert {\rm tr}\, ((f \circ g^{k_n})^* \vert_{ {\rm NS}\,(X)})\vert \le 22$. 
Hence, there are only finitely many positive integers $\ell$ such that $\vert {\rm tr}\, ((f \circ g^{\ell})^* \vert_{ {\rm NS}\,(X))}) \vert \le 22$. Thus, there is a positive integer $M$ such that $\vert {\rm tr}\, ((f \circ g^N)^* \vert _{{\rm NS}\,(X)})) \vert \ge 23$ for all integers $N$ such that $N \ge M$.

Let us choose $N$ such that $N\ge M$ and $( N, m )=1$. Define $\tau:= f\circ g^N$. Then $|{\rm tr} (\tau^*|_{{\rm NS}(X)})| \ge 23$, and $\tau^*\omega=\xi_m^N \omega$.  Note $\xi_m^N$ is a primitive $m$-th root of unity with $m\ge 67$.  Thus $\tau\in {\rm Aut}(X)$ is of positive entropy and not geometrically liftable to characteristic $0$.

\end{proof}

\begin{remark}\label{independentoflift}
In \cite{EOY14} we show that various elliptic structures on $X(p)$, for 
 $p \ge 11$ and $p \not= 13$, with a rich Mordell-Weil group force the existence of autmorphisms with entropy the logarithm of a degree $22$ Salem number. 
\end{remark}

\end{document}